\numberwithin{equation}{section}
\DeclareMathOperator{\Ker}{Ker}
\DeclareMathOperator{\Ran}{Ran}
\DeclareMathOperator{\dist}{dist}
\DeclarePairedDelimiter{\abs}{|}{|}
\DeclarePairedDelimiter{\norm}{\lVert}{\rVert}
\newcommand{\eps}{\varepsilon}
\newcommand{\x}{z}
\newcommand{\sfuc}{\mathrm{sfuc}}
\newcommand{\dd}{\mathrm d}
\newcommand{\ee}{\mathrm e}
\newcommand{\ii}{\mathrm i}
\newcommand{\Id}{\mathrm{Id}}
\newcommand{\E}{\mathbb{E}}
\newcommand{\N}{\mathbb{N}}
\newcommand{\NN}{\mathbb{N}}
\renewcommand{\P}{\mathbb{P}}
\newcommand{\R}{\mathbb{R}}
\newcommand{\RR}{\mathbb{R}}
\newcommand{\ZZ}{\mathbb{Z}}
\newcommand{\one}{\mathbbm{1}}
\renewcommand{\Re}{\operatorname{Re}}
\newcommand{\cB}{{\mathcal B}}
\newcommand{\cF}{{\mathcal F}}
\newcommand{\cH}{{\mathcal H}}
\newcommand{\cU}{{\mathcal U}}
\newtheorem{theorem}{Theorem}[section]{\bf}{\it}
\newtheorem{hypothesis}[theorem]{Hypothesis}{\bf}{\it}
\newtheorem{lemma}[theorem]{Lemma}{\bf}{\it}
\newtheorem{corollary}[theorem]{Corollary}{\bf}{\it}
\newtheorem{proposition}[theorem]{Proposition}{\bf}{\it}
\theoremstyle{remark}
\newtheorem{remark}[theorem]{Remark}{\it}{\rm}
\renewcommand{\theenumi}{\alph{enumi}}
\title[Approximation for the control problem of the heat equation]{Exhaustion approximation for the control problem of the heat or
Schr\"odinger semigroup on unbounded domains}
\subjclass[2010]{Primary 35Q93; Secondary 93Bxx.}
\keywords{control problem, heat equation, Schr\"odinger semigroup, unbounded domain, approximation exhaustion}
\thanks{Filename  \jobname.tex }
\date{}
\author[A.~Seelmann]{Albrecht Seelmann}
\address{A.~Seelmann,
Technische Univer\-si\-t\"at Dortmund,
D-44227 Dortmund, Germany}
\email{albrecht.seelmann@math.tu-dortmund.de}
\author[I.~Veseli\'c]{Ivan Veseli\'c}
\address{I.~Veseli\'c,
Technische Univer\-si\-t\"at Dortmund,
D-44227 Dortmund, Germany}
\email{ivan.veselic@math.tu-dortmund.de}
\begin{document}

\begin{abstract}
We consider the control problem of the heat equation on bounded and unbounded domains, and more generally the corresponding
inhomogeneous equation for the Schr\"odinger semigroup. We show that if the sequence of null-controls associated to an exhaustion of
an unbounded domain converges, then the solutions do in the same way, and that the control cost estimate carries over to the
limiting problem on the unbounded domain. This allows to infer the controllability on unbounded domains by studying the control
problem on a sequence of bounded domains.
\end{abstract}

\maketitle

\section{Introduction}

Let $\cH$ and $\cU$ be Hilbert spaces, $H$ a lower semibounded self-adjoint operator on $\cH$, $B\colon\cU\to\cH$ a bounded linear
operator, and $T>0$.

We consider the abstract Cauchy problem
\begin{equation}\label{eq:abstrCauchy}
 \partial_t u(t) + H u(t) = Bf(t) \quad\text{ for }\quad 0<t<T,\quad u(0)=u_0,
\end{equation}
for given $u_0\in\cH$ and $f\in L^2((0,T),\cU)$, the~\emph{mild solution} of which is the continuous function $u\colon [0,T]\to\cH$
with
\begin{equation*}
 u(t) = \ee^{-tH}u_0 + \int_0^t \ee^{-(t-s)H} Bf(s)\,\dd s.
\end{equation*}
The mapping $\cB^T\colon L^2((0,T),\cU) \to \cH$ with
\begin{equation*}
 \cB^T f:= \int_0^T \ee^{-(T-s)H}Bf(s)\,\dd s
\end{equation*}
is called the~\emph{controllability map} for the system~\eqref{eq:abstrCauchy}. The system~\eqref{eq:abstrCauchy} is said to
be~\emph{null-controllable in time} $T>0$ if
\begin{equation*}
 \Ran \ee^{-TH} \subset \Ran \cB^T,
\end{equation*}
equivalently, if for every initial datum $u_0\in\cH$ there is a function $f\in L^2((0,T),\cU)$ with $u(T)=\ee^{-TH}u_0+\cB^T f=0$,
which explains the terminology. We call such a function $f$ a~\emph{null-control} for the initial datum $u_0$.

Note that the system~\eqref{eq:abstrCauchy} automatically is null-controllable if $B$ is surjective since then for given $u_0\in\cH$
one can choose a function $f\in L^2((0,T),\cU)$ with $Bf(t)=-\ee^{-tH}u_0/T$, and this function $f$ is a null-control.

It is well known that if the system~\eqref{eq:abstrCauchy} is null-controllable in time $T>0$, then for every initial datum $u_0$
there is a unique null-control of minimal norm in $L^2((0,T),\cU)$. Moreover, the mapping which assigns to each $u_0$ this unique
null-control is a bounded linear operator from $\cH$ to $L^2((0,T),\cU)$. We call this operator the~\emph{optimal feedback operator}
and denote it by $\cF^T$. We may now define the associated~\emph{control cost in time $T>0$} as
\begin{equation}\label{eq:controlCost}
 C_T := \norm{\cF^T} = \sup_{\norm{u_0}_\cH=1} \min\{ \norm{f}_{L^2((0,T),\cU)} \mid \ee^{-TH}u_0 + \cB^T f = 0 \}.
\end{equation}
In Appendix~\ref{sec:optimalFeedback} below we provide a more detailed background on the optimal feedback operator.

The question whether a given system is null-controllable and establishing estimates on the associated control cost are central
aspects of control theory, both in the context of abstract Cauchy problems as well as for partial differential equations. For a
broader discussion we refer, e.g., to the monographs \cite{FursikovI-96,Coron07,TW09} and the references therein.

We discuss the above situation in the setting where $H$ is an electromagnetic Schr\"odinger operator and $B$ is the multiplication
with a characteristic function. To this end, let $\Omega\subset\R^d$ be open, $\cH=\cU=L^2(\Omega)$, $\omega\subset\Omega$
measurable, and $B=\chi_\omega$. Moreover, let $A\in L_\text{loc}^2(\Omega,\R^d)$, and $V\colon\R^d\to\R$ such that
$V_+:=\max(V,0)\in L_\text{loc}^1(\Omega)$ and $V_-:=\max(-V,0)\colon \RR^d\to \RR$ is in the Kato class; see,
e.g.,~\cite[Section~4]{AS82} and also~\cite[Section~1.2]{CFKS87} for a discussion of the Kato class in $\R^d$.

Under these hypotheses, one can define the Dirichlet electromagnetic Schr\"odinger operator $H=H_\Omega(A,V)$ as a lower semibounded
self-adjoint operator on $L^2(\Omega)$ associated with the differential expression
\begin{equation*}
 (-\ii\nabla -A)^2 + V
\end{equation*}
via its quadratic form (with form core $C_c^\infty(\Omega)$). For details of this construction we refer to~\cite[Section~2]{HS04};
see also~\cite[Section~2]{BHL00} and the references therein. Note that the standard Dirichlet Laplacian $-\Delta_\Omega$ on $\Omega$
appears here as the particular case $-\Delta_\Omega=H_\Omega(0,0)$.

In this situation, the abstract Cauchy problem~\eqref{eq:abstrCauchy} reads
\begin{equation}\label{eq:abstrCauchyH}
 \partial_t u(t) + H_\Omega(A,V)u(t) = \chi_\omega\cdot f(t)\quad\text{ for }\quad 0<t<T,\quad u(0)=u_0.
\end{equation}
Observe that null-controls for this system can be assumed to be supported in $\omega$, that is, $f\in L^2((0,T),L^2(\omega))$. Note
also that this system automatically is null-controllable if $\omega=\Omega$ since then $B=\chi_\omega$ is surjective. Thus, in this
context, we may always assume that $\omega$ is a proper subset of $\Omega$.

We want to study the control problem for the system~\eqref{eq:abstrCauchyH} under varying domains $\Omega$, namely for an exhaustion
of a given domain in $\R^d$.

\subsection{The main result}\label{subsec:main}
Let $\Gamma\subset\R^d$ be open, $A\in L_\text{loc}^2(\Gamma,\R^d)$ and $V\colon\R^d\to\R$ such that
$V_+:=\max(V,0)\in L_\text{loc}^1(\Gamma)$ and $V_-:=\max(-V,0)$ is in the Kato class.

For $L>0$ we use the notation $\Lambda_L:=(-L/2,L/2)^d$ and $\Gamma_L:=\Gamma\cap\Lambda_L$. The sets $(\Gamma_L)_{L>0}$ clearly
give an exhaustion of $\Gamma$.

\begin{theorem}\label{thm:main}
 Let $S\subset\R^d$ be measurable, $\tilde u\in L^2(\Gamma)$, and $(L_n)_n$ a sequence in $(0,\infty)$ with $L_n\nearrow\infty$ as
 $n\to\infty$. Let $f_n\in L^2((0,T),L^2(\Gamma_{L_n}\cap S))$ for each $n\in\N$ be a null-control for the initial value
 problem~\eqref{eq:abstrCauchyH} on $\Omega=\Gamma_{L_n}$ with $\omega=\Gamma_{L_n}\cap S$ and initial value
 $u_0=\tilde u|_{\Gamma_{L_n}}$, and let $u_n$ be the corresponding mild solution.

 Suppose that $(f_n)_n$ converges weakly in $L^2((0,T),L^2(\Gamma))$ to $f\in L^2((0,T),L^2(\Gamma\cap S))$. Then, $f$ is a
 null-control for~\eqref{eq:abstrCauchyH} on $\Omega=\Gamma$ with $\omega=\Gamma\cap S$ and initial value $u_0=\tilde u$, and the
 corresponding mild solution is the weak limit of $(u_n)_n$ in $L^2((0,T),L^2(\Gamma))$.
\end{theorem}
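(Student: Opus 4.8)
The plan is to carry out everything inside the single space $L^2(\Gamma)$, viewing each $L^2(\Gamma_{L_n})$ as a closed subspace via extension by zero, and to reduce the claim to two ingredients: strong convergence of the truncated semigroups and a weak--strong pairing for the Duhamel term. Write $H_n := H_{\Gamma_{L_n}}(A,V)$ and $H := H_\Gamma(A,V)$, let $J_n\colon L^2(\Gamma_{L_n})\to L^2(\Gamma)$ be extension by zero with adjoint $J_n^*$ the restriction, and set $S_n(t) := J_n\ee^{-tH_n}J_n^*$. Since $V_-$ is Kato class, its relative form bound is uniform along the exhaustion (the bound on $\R^d$ restricts to each $\Gamma_{L_n}$, and the diamagnetic inequality controls the magnetic form from below), so the $H_n$ are lower semibounded by a constant independent of $n$ and $\norm{S_n(t)}\le\ee^{ct}$ with $c$ independent of $n$ and $t\in[0,T]$. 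Also $\chi_{\Gamma_{L_n}\cap S}f_n(s)=f_n(s)$, since each $f_n(s)$ is already supported in $\Gamma_{L_n}\cap S$, so the mild solution, read in $L^2(\Gamma)$, is $u_n(t)=S_n(t)\tilde u+\int_0^t S_n(t-s)f_n(s)\,\dd s$.

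First I would establish that $S_n(t)\to\ee^{-tH}$ strongly on $L^2(\Gamma)$ for each $t\ge0$. This rests on the strong resolvent convergence $H_n\to H$, which is the standard monotone convergence of the associated quadratic forms along a domain exhaustion: extension by zero embeds the form domain of $H_n$ into that of $H_{n+1}$ without changing form values, and the union of these domains contains $C_c^\infty(\Gamma)$, a form core for $H$, so the monotone limit form is exactly that of $H_\Gamma$. Strong resolvent convergence then yields strong convergence of the semigroups, and together with the uniform bound, dominated convergence upgrades this to $S_n(\cdot)v\to\ee^{-\cdot H}v$ in $L^2((0,T),L^2(\Gamma))$ for fixed $v$. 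In particular the free parts $S_n(t)\tilde u\to\ee^{-tH}\tilde u$ converge, both pointwise in $t$ and in $L^2((0,T),L^2(\Gamma))$.

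The decisive step is the Duhamel term $b_n(t):=\int_0^t S_n(t-s)f_n(s)\,\dd s$, where passing to the limit is obstructed by the fact that $f_n$ converges only weakly while $S_n$ converges only strongly --- a weak-times-strong product for which naive passage to the limit fails. The remedy is to test against a fixed $\phi\in L^2((0,T),L^2(\Gamma))$, apply Fubini on $\{0\le s\le t\le T\}$, and move the self-adjoint semigroup to the other factor:
\[
 \langle b_n,\phi\rangle = \int_0^T\Big\langle f_n(s),\int_s^T S_n(t-s)\phi(t)\,\dd t\Big\rangle\dd s = \langle f_n,g_n\rangle,
 \quad g_n(s):=\int_s^T S_n(t-s)\phi(t)\,\dd t.
\]
By the previous step $g_n\to g$ strongly in $L^2((0,T),L^2(\Gamma))$ with $g(s)=\int_s^T\ee^{-(t-s)H}\phi(t)\,\dd t$. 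Splitting $\langle f_n,g_n\rangle-\langle f,g\rangle=\langle f_n,g_n-g\rangle+\langle f_n-f,g\rangle$, the first term vanishes because $\norm{f_n}$ is bounded and $\norm{g_n-g}\to0$, the second because $f_n\rightharpoonup f$. Hence $b_n\rightharpoonup b$, and combined with the strong convergence of the free parts, $u_n\rightharpoonup u$ in $L^2((0,T),L^2(\Gamma))$, where $u$ is the mild solution for $f$ on $\Gamma$.

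Finally, to see that $f$ is a null-control I would run the same pairing at the endpoint $t=T$: for fixed $\psi\in L^2(\Gamma)$ one has $\langle u_n(T),\psi\rangle=\langle S_n(T)\tilde u,\psi\rangle+\langle f_n,h_n\rangle$ with $h_n(s):=S_n(T-s)\psi\to\ee^{-(T-s)H}\psi$ strongly in $L^2((0,T),L^2(\Gamma))$, so the identical weak--strong argument gives $u_n(T)\rightharpoonup u(T)$. As each $f_n$ is a null-control, $u_n(T)=0$, whence $u(T)=0$; that is, $f$ is a null-control for the problem on $\Gamma$. The only genuine obstacle is the weak-times-strong product in the Duhamel integral, resolved by the duality trick above; the remainder is bookkeeping around the standard strong resolvent and semigroup convergence under domain exhaustion.
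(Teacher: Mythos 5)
Your proof is essentially correct, and its control-theoretic core coincides with the paper's: the weak--strong pairing for the Duhamel term via the adjoint (your $g_n(s)=\int_s^T S_n(t-s)\phi(t)\,\dd t$ is precisely the adjoint controllability map $(\cB_n^T)^*$ of Lemma~\ref{lem:controllabilitymap}, and the split $\langle f_n,g_n-g\rangle+\langle f_n-f,g\rangle$ is the mechanism of Lemma~\ref{lem:weakStrongConv}\,(b)), and the endpoint evaluation $u_n(T)=0\Rightarrow u(T)=0$ is Corollary~\ref{cor:weakConv}. Where you genuinely diverge is the input $S_n(t)\to\ee^{-tH}$ strongly. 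The paper proves this (Lemma~\ref{lem:semigroup} and Corollary~\ref{cor:semigroup}) probabilistically, via the Feynman--Kac--It\^o formula and an exponential estimate for Brownian exit events; this buys explicit error bounds of order $\exp(-L^2/(32t))$, which the authors flag as a feature. You instead invoke monotone convergence of quadratic forms along the exhaustion, which gives only qualitative convergence but avoids stochastic analysis entirely. That route can be made rigorous, but be aware it is less ``standard'' than your wording suggests, and the paper explicitly remarks (Remark~\ref{rem:Dan}) that it found no reference for the general magnetic/Kato-class case: the naively extended operators $H_n\oplus 0$ on $L^2(\Gamma)$ do \emph{not} form a monotone family of densely defined forms (their form domains contain all of $L^2(\Gamma\setminus\Lambda_{L_n})$ with form value zero), so one must use the Kato--Simon monotone convergence theorem for \emph{non-densely defined} closed forms, in which the associated operator is exactly your $S_n(t)=J_n\ee^{-tH_n}J_n^*=\ee^{-tH_n}P_n$ and the forms are extended by $+\infty$ off their domains. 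With that convention the sequence is genuinely decreasing, the limit form is a restriction of the closed form of $H_\Gamma$ to $\bigcup_nQ(h_{\Gamma_{L_n}})\supset C_c^\infty(\Gamma)$, hence closable with closure (equivalently, regular part) equal to $h_\Gamma$ because $C_c^\infty(\Gamma)$ is a form core; only then does the theorem deliver $S_n(t)\to\ee^{-tH}$. You should spell out this identification of the regular part rather than asserting that ``the monotone limit form is exactly that of $H_\Gamma$''; as stated, that sentence hides the only nontrivial point of your alternative route.
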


Our technique to prove Theorem~\ref{thm:main} is not restricted to the specific choice of $(\Lambda_L)_{L>0}$. In fact, any
reasonable exhaustion of $\R^d$ would do instead, see Remark~\ref{rem:genExhaust} and Corollary~\ref{cor:semigroup} below. However,
the applications discussed in Section~\ref{sec:applications} below highly rely on this specific choice of the exhaustion.

\begin{remark}
 Parts of the convergence proof of Theorem~\ref{thm:main} provide quantitative error estimates, see Lemma~\ref{lem:semigroup} below.
 However, since we do not assume a speed of convergence of $(f_n)_n$ to $f$, we are also not able to give a complete error estimate
 of the corresponding convergence of $(u_n)_n$ to $u$. We conjecture that for null-controls $(f_n)_n$ with minimal norm the
 approximation error $f_n-f$ can be bounded efficiently. This will be explored in a later project.
\end{remark}

Note that if the null-controls $f_n$ in Theorem~\ref{thm:main} are uniformly bounded, that is,
\begin{equation}\label{eq:mainCrit}
 \norm{f_n}_{L^2((0,T),L^2(\Gamma_{L_n}\cap S))} \le c \quad\text{ for all }\quad n\in\N
\end{equation}
for some constant $c>0$, then $(f_n)_n$ has a weakly convergent subsequence with limit in $L^2((0,T),L^2(\Gamma\cap S))$.
Theorem~\ref{thm:main} can then be applied to every such weakly convergent subsequence, and the corresponding weak limit $f$ of the
subsequence of $(f_n)_n$ automatically satisfies the bound
\begin{equation}\label{eq:limitBound}
 \norm{f}_{L^2((0,T),L^2(\Gamma\cap S))}\le c.
\end{equation}

This leads to the following corollary to Theorem~\ref{thm:main}.

\begin{corollary}\label{cor:main}
 Let $S\subset\R^d$ be measurable, $\tilde u\in L^2(\Gamma)$, and $(L_n)_n$ a sequence in $(0,\infty)$ with $L_n\nearrow\infty$ as
 $n\to\infty$. Let $f_n\in L^2((0,T),L^2(\Gamma_{L_n}\cap S))$ for each $n\in\N$ be a null-control for the initial value
 problem~\eqref{eq:abstrCauchyH} on $\Omega=\Gamma_{L_n}$ with $\omega=\Gamma_{L_n}\cap S$ and initial value
 $u_0=\tilde u|_{\Gamma_{L_n}}$, and let $u_n$ be the corresponding mild solution.

 Assume that there is a constant $c\in\R$ such that~\eqref{eq:mainCrit} holds. Then there exists a subsequence of $(f_n)_n$ which
 converges weakly to a null-control $f\in L^2((0,T),L^2(\Gamma\cap S))$ for~\eqref{eq:abstrCauchyH} on $\Omega=\Gamma$ with
 $\omega=\Gamma\cap S$ and initial value $u_0=\tilde u$, satisfying~\eqref{eq:limitBound}. The mild solution $u$ associated to (any
 such weak accumulation point) $f$ is the weak limit of the corresponding subsequence of $(u_n)_n$ in $L^2((0,T),L^2(\Gamma))$.
\end{corollary}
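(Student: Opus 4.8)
The plan is to realize the corollary as a direct consequence of Theorem~\ref{thm:main} combined with the weak sequential compactness of bounded sets in the Hilbert space $L^2((0,T),L^2(\Gamma))$. First I would extend each null-control $f_n$ by zero outside $\Gamma_{L_n}\cap S$, so that it may be regarded as an element of $L^2((0,T),L^2(\Gamma))$ supported in $(0,T)\times(\Gamma_{L_n}\cap S)$. The uniform bound~\eqref{eq:mainCrit} then says precisely that $(f_n)_n$ is a bounded sequence in this Hilbert space.

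Since $L^2((0,T),L^2(\Gamma))$ is a Hilbert space, every bounded sequence admits a weakly convergent subsequence. I would therefore extract a subsequence $(f_{n_k})_k$ converging weakly to some $f\in L^2((0,T),L^2(\Gamma))$. The next step is to identify the support and the norm of this limit. The set of functions in $L^2((0,T),L^2(\Gamma))$ that vanish almost everywhere outside $(0,T)\times(\Gamma\cap S)$ is a norm-closed linear subspace, hence convex and norm-closed, hence weakly closed. As each $f_{n_k}$ lies in this subspace, because $\Gamma_{L_{n_k}}\cap S\subset\Gamma\cap S$, the weak limit $f$ does too, so $f\in L^2((0,T),L^2(\Gamma\cap S))$. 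Moreover, the norm of a Hilbert space is weakly lower semicontinuous, whence $\norm{f}\le\liminf_k\norm{f_{n_k}}\le c$, which is exactly~\eqref{eq:limitBound}.

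Finally I would apply Theorem~\ref{thm:main} to the subsequence $(f_{n_k})_k$ and its weak limit $f$. The theorem yields that $f$ is a null-control for~\eqref{eq:abstrCauchyH} on $\Omega=\Gamma$ with $\omega=\Gamma\cap S$ and initial value $\tilde u$, and that the associated mild solution $u$ is the weak limit in $L^2((0,T),L^2(\Gamma))$ of the corresponding subsequence $(u_{n_k})_k$. This is precisely the asserted conclusion.

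There is essentially no hard analytic obstacle here, since the entire substance of the result sits in Theorem~\ref{thm:main}; the corollary merely packages the standard compactness extraction. The only point requiring a little care is the weak closedness of the support constraint, which could alternatively be argued directly by noting that each $f_{n_k}$ is orthogonal to every test function supported in $(0,T)\times(\Gamma\setminus S)$ and passing to the weak limit to obtain the same orthogonality for $f$.
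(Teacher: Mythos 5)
Your proposal is correct and follows essentially the same route as the paper, which likewise obtains the corollary by extracting a weakly convergent subsequence from the uniformly bounded sequence $(f_n)_n$, noting that the weak limit stays in $L^2((0,T),L^2(\Gamma\cap S))$ and satisfies~\eqref{eq:limitBound} by weak lower semicontinuity of the norm, and then applying Theorem~\ref{thm:main} to that subsequence. Your explicit justification of the weak closedness of the support constraint is a detail the paper leaves implicit, but the argument is the same.
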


Corollary~\ref{cor:main} in particular implies that estimates on the control cost~\eqref{eq:controlCost} with respect to
$\Gamma_{L_n}$ which are uniform in $n$ carry over to the limiting domain $\Gamma$.
It is Corollary~\ref{cor:main} that we will invoke in the applications below.

The rest of this note is organized as follows. 
In Section~\ref{sec:applications}, we apply Corollary~\ref{cor:main} to derive null-controllability for a (generalized) heat
equation on  $\Gamma=\R^d$ and appropriate choices of the control set $S$ from analogous results for the corresponding equation on
exhausting bounded domains.

Section~\ref{sec:semigroups} provides estimates on the difference of the action of Schr\"odinger semigroups on two different domains 
and an approximation result for a sequence of semigroups associated with an exhaustion of a given domain.
The following Section~\ref{sec:abstrCauchy} discusses the dependence of the solution of the abstract Cauchy problem on the
inhomogeneity. This is then combined with the mentioned semigroup approximation result to give an abstract convergence result which
contains Theorem~\ref{thm:main} as a special case.

Finally, Appendix~\ref{sec:optimalFeedback} provides some background on the optimal feedback operator.

\section{Applications}\label{sec:applications}
Here, we discuss in the case $\Gamma=\R^d$ particular choices for $A$, $V$, and $S$, where condition~\eqref{eq:mainCrit} can be
guaranteed. However, instead of $\R^d$, by a translation argument, our considerations apply just as well for every set $\Gamma$ that
can be exhausted with cubes such as the half-space $\Gamma=\R^{d-1}\times\R_+$ and the positive orthant $\Gamma=\R_+^d$.

\subsection{Control of the heat equation on thick sets}\label{sec:heat}

We consider the heat equation on $\R^d$,
\begin{equation}\label{eq:heatRd}
 \partial_t u(t) - \Delta_{\R^d} u(t) = \chi_S\cdot f(t) \quad\text{ for }\quad 0 < t < T,\quad u(0)=u_0,
\end{equation}
which is obtained from~\eqref{eq:abstrCauchyH} on $\Omega=\R^d$ with $A=0$, $V=0$, and $\omega=S\subset\R^d$. In this situation,
Theorem~3 in~\cite{EgidiV-18} states that~\eqref{eq:heatRd} is null-controllable if and only if $S$ is~\emph{thick} in $\R^d$ (see
also~\cite{WangWZZ}). The latter means that for some parameters $\gamma>0$ and $a=(a_1,\dots,a_d)\in\R_+^d$ one has
\begin{equation*}
 \abs{S\cap(x+[0,a_1]\times\dots[0,a_d])}\ge\gamma\prod_{j=1}^da_j
\end{equation*}
for all $x\in\R^d$, where $\abs{\,\cdot\,}$ denotes the Lebesgue measure on $\R^d$. In this case, $S$ is also said to be
$(\gamma,a)$-thick.

We have the following reformulation of Theorem~4 in~\cite{EgidiV-18}.

\begin{proposition}[{\cite[Theorem~4]{EgidiV-18}}]\label{prop:EV}
 Let $S\subset\R^d$ be a $(\gamma,a)$-thick set and $L\ge\max_{j=1,\dots,d}a_j$. Then, the system
 \begin{equation}\label{eq:heat}
  \partial_t u(t) - \Delta_{\Lambda_L} u(t) = \chi_{\Lambda_L\cap S}\cdot f(t) \quad\text{ for }\quad 0 < t < T,\quad u(0)=u_0,
 \end{equation}
 is null-controllable. Moreover, there is a universal constant $K>0$ such that for every initial datum $u_0\in L^2(\Lambda_L)$ there
 is a corresponding null-control $f\in L^2((0,T),L^2(\Lambda_L\cap S))$ satisfying
 \begin{equation*}
  \norm{f}_{L^2((0,T),L^2(\Lambda_L\cap S))}
  \le
  \tilde{K}^{1/2}\exp\Bigl(\frac{\tilde{K}}{2T}\Bigr)\cdot\norm{u_0}_{L^2(\Lambda_L)},
  \quad \tilde{K}=\Bigl(\frac{K^d}{\gamma}\Bigr)^{K(d+\norm{a}_1)},
 \end{equation*}
 where $\norm{a}_1=a_1+\dots+a_d$.
\end{proposition}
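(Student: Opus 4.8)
The plan is to deduce the proposition directly from~\cite[Theorem~4]{EgidiV-18}, treating it genuinely as a reformulation: the existence of null-controls and the exponential cost bound are already contained in that reference, so the actual work is to match the geometric setup, to check the role of the hypothesis $L\ge\max_{j}a_{j}$, and to identify the displayed bound with the control cost~\eqref{eq:controlCost}.

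First I would reduce to the cited setting. Thickness is invariant under translations, so the precise placement of the cube $\Lambda_L=(-L/2,L/2)^d$ is immaterial; only its side length $L$ enters. The hypothesis $L\ge\max_{j}a_{j}$ is exactly what guarantees that $\Lambda_L$ contains at least one full translate of the block $[0,a_1]\times\dots\times[0,a_d]$, so that the $(\gamma,a)$-thickness of $S$ is usable on the box and the underlying spectral inequality is nontrivial. Under this identification, \cite[Theorem~4]{EgidiV-18} produces, for each $u_0\in L^2(\Lambda_L)$, a null-control $f\in L^2((0,T),L^2(\Lambda_L\cap S))$ for~\eqref{eq:heat} obeying the stated bound.

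Next I would pass to the control cost. Since the bound is linear in $\norm{u_0}_{L^2(\Lambda_L)}$ and holds for every initial datum, taking the minimum over all null-controls for a fixed $u_0$ and then the supremum over $\norm{u_0}_{L^2(\Lambda_L)}=1$ yields $C_T\le\tilde K^{1/2}\exp(\tilde K/(2T))$ in the sense of~\eqref{eq:controlCost}, with $\tilde K=(K^d/\gamma)^{K(d+\norm{a}_1)}$. For completeness, and in case one wishes to reprove the estimate rather than cite it, I would recall the mechanism behind it. The engine is the spectral inequality
\begin{equation*}
 \norm{g}_{L^2(\Lambda_L)}\le d_0\,\ee^{d_1\sqrt\lambda}\,\norm{g}_{L^2(\Lambda_L\cap S)},\qquad g\in\Ran\one_{[0,\lambda]}(-\Delta_{\Lambda_L}),
\end{equation*}
obtained by applying a Logvinenko--Sereda (uncertainty) theorem to the (suitably reflected) band-limited Dirichlet eigenfunctions on the box, with constants $d_0,d_1$ depending only on $d$, $\gamma$, and $\norm{a}_1$. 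Combined with the dissipation estimate $\norm{\ee^{-t(-\Delta_{\Lambda_L})}\one_{(\lambda,\infty)}(-\Delta_{\Lambda_L})}\le\ee^{-t\lambda}$, the Lebeau--Robbiano / Fern\'andez-Cara--Zuazua frequency-cutoff iteration assembles a null-control from controls acting on spectral subspaces of growing cutoff $\lambda$, and produces the quoted cost.

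The main obstacle is not the existence of a control---that is immediate once the citation is invoked---but the precise form of the constant $\tilde K=(K^d/\gamma)^{K(d+\norm{a}_1)}$ with a universal $K$ independent of $L$, $T$, $\gamma$, $a$, and $d$. This requires careful bookkeeping of how the thickness parameters enter $d_0,d_1$ in the spectral inequality and how these propagate through the summation over the frequency levels of the iteration; this quantitative tracking is exactly the content of~\cite[Theorem~4]{EgidiV-18} that we are reformulating here.
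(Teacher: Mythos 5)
Your proposal matches the paper's treatment: the paper gives no proof of this proposition but simply cites \cite[Theorem~4]{EgidiV-18} as the source, exactly as you do, and your additional sketch of the Logvinenko--Sereda spectral inequality combined with the Lebeau--Robbiano iteration correctly describes the mechanism inside that reference. Nothing further is required.
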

Recall that the Laplacian on $\Lambda_L$ in \eqref{eq:heat} is provided with Dirichlet boundary conditions.
Since for $u_0\in L^2(\R^d)$ one has $\norm{u_0}_{L^2(\Lambda_L)} \le \norm{u_0}_{L^2(\R^d)}$ for all $L>0$,
Proposition~\ref{prop:EV} in combination with Corollary~\ref{cor:main} allows us to reproduce the sufficiency part of Theorem~3
in~\cite{EgidiV-18} as part of the following corollary.

\begin{corollary}\label{cor:EV}
 Let $S\subset\R^d$ be thick, $\tilde u\in L^2(\R^d)$, and $(L_n)_n$ a sequence in $(0,\infty)$ with $L_n\nearrow\infty$ as
 $n\to\infty$. Then, there is a subsequence $(L_{n_k})_k$ of $(L_n)_n$ and null-controls $f_{n_k}$ for the initial value
 problem~\eqref{eq:heat} with $u_0=\tilde u|_{\Lambda_{L_{n_k}}}$ with corresponding mild solutions $u_{n_k}$ such that:
 \begin{enumerate}
  \renewcommand{\theenumi}{\roman{enumi}}
  \item $(f_{n_k})_k$ converges weakly in $L^2((0,T),L^2(\R^d))$ to a null-control for the initial value problem~\eqref{eq:heatRd}
        with $u_0=\tilde{u}$;
  \item the corresponding mild solution to the initial value problem~\eqref{eq:heatRd} is the weak limit of $(u_{n_k})_k$ in
        $L^2((0,T),L^2(\R^d))$.
 \end{enumerate}
 In particular, the system~\eqref{eq:heatRd} is null-controllable, and the associated control cost $C_T$ satisfies
 \begin{equation*}
  C_T \le \tilde{K}^{1/2}\exp\Bigl(\frac{\tilde{K}}{2T}\Bigr)
 \end{equation*}
 with $\tilde{K}$ as in Proposition~\ref{prop:EV}.
\end{corollary}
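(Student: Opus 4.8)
The plan is to combine Proposition~\ref{prop:EV} with Corollary~\ref{cor:main} in the special case $\Gamma=\R^d$, $A=0$, $V=0$, and $\omega=S$. Here the exhaustion cubes satisfy $\Gamma_L=\R^d\cap\Lambda_L=\Lambda_L$, so that the system~\eqref{eq:heat} is precisely~\eqref{eq:abstrCauchyH} on $\Omega=\Gamma_L$ with $\omega=\Gamma_L\cap S=\Lambda_L\cap S$, and likewise~\eqref{eq:heatRd} is~\eqref{eq:abstrCauchyH} on $\Omega=\R^d$ with $\omega=\R^d\cap S=S$. The entire argument thus reduces to verifying the uniform-bound hypothesis~\eqref{eq:mainCrit} and then reading off the conclusions of Corollary~\ref{cor:main}.

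First I would fix, using thickness of $S$, parameters $\gamma>0$ and $a\in\R_+^d$ such that $S$ is $(\gamma,a)$-thick. Passing to the tail of $(L_n)_n$ (which is harmless since $L_n\nearrow\infty$ and the statement permits choosing a subsequence), I may assume $L_n\ge\max_{j}a_j$ for all $n$, so that Proposition~\ref{prop:EV} applies on each $\Lambda_{L_n}$. For each such $n$ it produces a null-control $f_n\in L^2((0,T),L^2(\Lambda_{L_n}\cap S))$ for~\eqref{eq:heat} with $u_0=\tilde{u}|_{\Lambda_{L_n}}$ satisfying $\norm{f_n}\le\tilde{K}^{1/2}\exp\bigl(\tilde{K}/(2T)\bigr)\cdot\norm{\tilde{u}|_{\Lambda_{L_n}}}_{L^2(\Lambda_{L_n})}$. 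The decisive feature, which I would emphasize, is that $\tilde{K}$ depends only on $\gamma$, $a$, and $d$, and \emph{not} on $L_n$.

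Next I would invoke the trivial monotonicity $\norm{\tilde{u}|_{\Lambda_{L_n}}}_{L^2(\Lambda_{L_n})}\le\norm{\tilde{u}}_{L^2(\R^d)}$ to upgrade this into the $n$-independent bound $\norm{f_n}\le c$ with $c:=\tilde{K}^{1/2}\exp\bigl(\tilde{K}/(2T)\bigr)\cdot\norm{\tilde{u}}_{L^2(\R^d)}$. This is exactly condition~\eqref{eq:mainCrit}, so Corollary~\ref{cor:main} applies and yields a subsequence $(f_{n_k})_k$ converging weakly in $L^2((0,T),L^2(\R^d))$ to a null-control $f\in L^2((0,T),L^2(\R^d\cap S))=L^2((0,T),L^2(S))$ for~\eqref{eq:heatRd} with $u_0=\tilde{u}$; this gives~(i). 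The same corollary identifies the mild solution associated to $f$ as the weak limit of $(u_{n_k})_k$ in $L^2((0,T),L^2(\R^d))$, which gives~(ii).

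Finally, for the control-cost estimate I would rerun the preceding argument for an arbitrary initial datum $\tilde{u}=u_0\in L^2(\R^d)$ with $\norm{u_0}=1$: the limiting null-control $f$ then satisfies $\norm{f}\le\tilde{K}^{1/2}\exp\bigl(\tilde{K}/(2T)\bigr)$ by~\eqref{eq:limitBound}, hence the minimal-norm null-control for $u_0$ obeys the same bound, and taking the supremum over $\norm{u_0}=1$ in~\eqref{eq:controlCost} yields $C_T\le\tilde{K}^{1/2}\exp\bigl(\tilde{K}/(2T)\bigr)$. The only point deserving real care, and the main (indeed the sole) obstacle, is the $L$-independence of $\tilde{K}$ in Proposition~\ref{prop:EV}: it is precisely thickness of $S$ that supplies this uniformity, without which~\eqref{eq:mainCrit} could fail and the limiting control cost could degenerate in the passage $L_n\to\infty$.
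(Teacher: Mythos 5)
Your proposal is correct and follows exactly the route the paper intends: verify the uniform bound~\eqref{eq:mainCrit} via Proposition~\ref{prop:EV} together with the monotonicity $\norm{\tilde u}_{L^2(\Lambda_{L_n})}\le\norm{\tilde u}_{L^2(\R^d)}$ (the constant $\tilde K$ being independent of $L$), and then read off all conclusions, including the control cost bound via~\eqref{eq:limitBound}, from Corollary~\ref{cor:main}. The paper gives no separate proof beyond this one-line observation, so your write-up is if anything more detailed than the original.
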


\subsection{Control of the generalized heat equation on equidistributed sets}

Similarly as above, we can use the main result of~\cite{NakicTTV-18} to infer the existence of uniformly bounded null-controls along
a sequence of increasing cubes. The result of~\cite{NakicTTV-18} allows more general semigroup evolutions than discussed in
Subsection~\ref{sec:heat} above, but requires more restrictions on the control set $S$. Let us formulate this precisely:

In the following we consider Schr\"odinger operators $H_\Omega(A,V)=H_\Omega(0,V)$ with vanishing magnetic $A\equiv 0$ and bounded
electric potential $V\in L^\infty(\Omega)$. In accordance with the approximation problem we have in mind the domain $\Omega$ will be
either $\RR^d$ or a (large) cube $\Lambda_L$.

Given $G,\delta > 0$, we say that a sequence $z_j \in \RR^d$, $j \in (G \ZZ)^d$, is~\emph{$(G,\delta)$-equidistributed}, if
\[
 \forall j \in (G \ZZ)^d \colon \quad  B(\x_j , \delta) \subset \Lambda_G + j .
\]
Corresponding to a $(G,\delta)$-equidistributed sequence $z_j$ we define for $L \in G \NN$
the~\emph{$(G,\delta)$-equidistributed}
sets
\[
 S_\delta = \bigcup_{j \in (G \ZZ)^d } B(\x_j , \delta) , \quad
 S_\delta (L) = S_\delta \cap \Lambda_L .
\]
The main novelty of~\cite{NakicTTV-18} is a spectral inequality or uncertainty principle.
\begin{proposition}[{\cite[Corollary~2.4]{NakicTTV-18}}] \label{thm:NTTVresult}
 There is $N = N(d)$ such that for all
 $G> 0$, all $\delta \in (0,G/2)$,
 all $(G,\delta)$-equidistributed sequences,
 all measurable and bounded $V\colon \RR^d \to \RR$,
 all $L \in G\NN$,
 all $E \geq 0$
 and all $\phi \in \mathrm{Ran} (\chi_{(-\infty,E]}(H_{\Lambda_L}))$ we have
 \begin{equation}
 \label{eq:sfuc}
 \lVert \phi \rVert_{L^2 (S_\delta (L))}^2
 \geq C_{\sfuc, G} \lVert \phi \rVert_{L^2 (\Lambda_L)}^2
 \end{equation}
 where
 \begin{equation*}
 C_{\sfuc, G} = C_{\sfuc, G} (d, \delta ,  E, \lVert V \rVert_\infty )
 :=  \left(\frac{\delta}{G} \right)^{N \bigl(1 + G^{4/3} \lVert V \rVert_\infty^{2/3} + G \sqrt{E} \bigr)} .
 \end{equation*}
\end{proposition}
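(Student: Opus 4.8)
The plan is to establish Proposition~\ref{thm:NTTVresult} as a quantitative unique continuation estimate (a spectral inequality of Logvinenko--Sereda type) for the low-energy spectral subspace, following the ghost-dimension strategy of Lebeau--Robbiano and Jerison--Lebeau. Let $\{\phi_j\}$ be an orthonormal eigenbasis of $H_{\Lambda_L}$ with eigenvalues $\lambda_j$ (after a harmless shift we may assume $H_{\Lambda_L}\ge 1$, which only changes the effective energy level), and expand $\phi=\sum_{\lambda_j\le E}c_j\phi_j$. First I would lift $\phi$ to one extra dimension by setting
\[
 w(x,t)=\sum_{\lambda_j\le E}c_j\,\phi_j(x)\,\frac{\sinh(\sqrt{\lambda_j}\,t)}{\sqrt{\lambda_j}},
\]
so that $w$ solves the elliptic equation $-\Delta_x w-\partial_t^2 w+Vw=0$ on $\Lambda_L\times\R$ with Dirichlet data on $\partial\Lambda_L\times\R$, together with $w(\cdot,0)=0$ and $\partial_t w(\cdot,0)=\phi$. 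The energy cutoff $\lambda_j\le E$ is exactly what makes the construction useful: it forces the growth bound $\|w(\cdot,t)\|_{L^2(\Lambda_L)}\le |t|\,e^{\sqrt{E}|t|}\,\|\phi\|_{L^2(\Lambda_L)}$, which converts the purely spectral constraint on $\phi$ into a genuine a priori estimate for a solution of an elliptic PDE.

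Second, I would invoke a Carleman estimate for the augmented operator $-\Delta_{x,t}+V$ to derive a three-region interpolation (propagation of smallness) inequality: the size of $w$ on a small ball bounds, up to an interpolation exponent $\theta\in(0,1)$, a geometric mean between its size on a larger ball and on a fixed neighbourhood. Reading this at $t=0$ and using $\partial_t w(\cdot,0)=\phi$ transfers smallness of $\phi$ on $B(z_j,\delta)$ to a larger portion of a single cube $\Lambda_G+j$. Here the zeroth-order term $Vw$ has to be absorbed into the large Carleman parameter, which is the origin of the $G^{4/3}\|V\|_\infty^{2/3}$ contribution to the exponent, while the factor $e^{\sqrt{E}|t|}$ from the lifting step produces the $G\sqrt{E}$ contribution.

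Third, I would run a scale-invariant covering argument. By $(G,\delta)$-equidistribution each cube $\Lambda_G+j$ contains the ball $B(z_j,\delta)$, so the local interpolation inequality applies on every cube; rescaling from the unit scale to scale $G$ fixes the dependence on $\delta/G$, and chaining the estimates across neighbouring cubes and summing over all $j\in(G\ZZ)^d$ with $\Lambda_G+j\subset\Lambda_L$ passes from the union of balls $S_\delta(L)$ to all of $\Lambda_L$. Collecting the interpolation exponents then yields the stated constant $C_{\sfuc,G}=(\delta/G)^{N(1+G^{4/3}\|V\|_\infty^{2/3}+G\sqrt{E})}$.

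The step I expect to be the main obstacle is this final, quantitative bookkeeping: producing the constant with precisely this explicit, scale-invariant dependence on $\delta/G$, $\|V\|_\infty$, and $E$. Obtaining the exact $4/3$ and $2/3$ powers requires the sharp form of the Carleman estimate (in the spirit of Bourgain--Kenig and Jerison--Lebeau) together with a careful optimization of the large parameter, and the telescoping over the covering must be organized so that the number of iterations introduces no uncontrolled factors of $L$ — the bound has to be uniform over all $L\in G\NN$. That explicit tracking of constants, rather than any single qualitative ingredient, is where the genuine difficulty lies.
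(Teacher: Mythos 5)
The paper does not prove this proposition at all: it is imported verbatim as \cite[Corollary~2.4]{NakicTTV-18}, so there is no internal proof to compare your attempt against. That said, your outline does reflect the actual strategy of the cited reference: the ghost-dimension lifting $w(x,t)=\sum_{\lambda_j\le E}c_j\,\phi_j(x)\,\sinh(\sqrt{\lambda_j}\,t)/\sqrt{\lambda_j}$, Carleman-based three-region interpolation (propagation of smallness) inequalities read off at $t=0$, and a scale-free covering argument over the cubes $\Lambda_G+j$; your attribution of the $G^{4/3}\lVert V\rVert_\infty^{2/3}$ term to absorbing the potential into the large Carleman parameter and of the $G\sqrt{E}$ term to the growth of the lifted function is also correct.

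As a proof, however, the proposal has a genuine gap, and you name it yourself: essentially all of the quantitative content is deferred. What is missing is not ``bookkeeping'' but the core of the result: (i) the two sharp Carleman estimates actually needed (an interior one and a separate one adapted to the Dirichlet boundary of $\Lambda_L$, since the balls $B(z_j,\delta)$ may sit in cubes touching $\partial\Lambda_L$); (ii) the optimization of the large parameter that yields precisely the $4/3$ and $2/3$ powers; (iii) the chaining over the covering organized so that the interpolation exponent, and hence $N$, is independent of $L\in G\NN$ and of the position of the cube relative to the boundary; and (iv) the treatment of nonpositive eigenvalues in the lifting --- your ``harmless shift'' replaces $E$ by $E+\lVert V\rVert_\infty$, and one must check that this is absorbed into the stated form of $C_{\sfuc,G}$ rather than silently changing it. These items occupy the bulk of \cite{NakicTTV-18}; until they are carried out, what you have is an accurate roadmap, not a proof.
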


>From this one obtains as a corollary the following result which is given in Section~2 of~\cite{NakicTTV-18}.
\begin{proposition}[{\cite[Theorem~2.15]{NakicTTV-18}}]
 \label{thm:contcost}
 For every $G > 0$, $\delta \in (0, G/2)$ and $K_V \geq 0$ there exists a time $T' = T'(G, \delta, K_V) > 0$ such that for all
 $T \in (0,T']$, all $(G,\delta)$-equidistributed sequences, all measurable and bounded $V\colon \RR^d \to \RR^d$ satisfying
 $\lVert V \rVert_\infty \leq K_V$, all $L \in G \NN$, and any initial datum $u_0\in L^2(\Lambda_L)$ there exists a null-control
 $f \in L^2((0,T),L^2(S_\delta(L)))$ for the initial value problem
 \begin{equation*}
 \partial_t u(t) +H_{\Lambda_L}(0,V) u(t) = f(t)
 \quad\text{ for }\quad 0<t<T,\quad u(0)=u_0,
 \end{equation*}
 with
 \begin{equation*}
  \Vert f\Vert_{L^2((0,T),L^2(S_\delta(L)))}
  \leq
  C_{con}  \cdot\norm{u_0}_{L^2(\Lambda_L)},
 \end{equation*}
 where
 \begin{equation*}
  C_{con}:= 2 \ee^{\lVert V \rVert_\infty} \left(\frac{G}{\delta}\right)^{N ( 1 + G^{4/3} \lVert V \rVert_\infty^{2/3})/2}
  \ee^{c_{\ast} /T},
 \end{equation*}
 $c_{\ast} = \ln (G/\delta)^2  \left( NG + 4 / \ln 2 \right)^2$, {and} $N = N (d)$ is the constant from
 Proposition~\ref{thm:NTTVresult}.
\end{proposition}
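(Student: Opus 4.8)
The plan is to derive the control cost bound from the spectral inequality of Proposition~\ref{thm:NTTVresult} by the Lebeau--Robbiano spectral strategy. Write $H=H_{\Lambda_L}(0,V)$ and $P_E=\chi_{(-\infty,E]}(H)$. Since $V$ is bounded one has $H\ge-\norm{V}_\infty$, so on the high-energy subspace $\Ran(\one-P_E)$ the semigroup dissipates, $\norm{\ee^{-tH}(\one-P_E)}\le\ee^{-tE}$ for $t\ge0$, while globally $\norm{\ee^{-tH}}\le\ee^{t\norm{V}_\infty}$. First I would rewrite \eqref{eq:sfuc} in multiplicative form: splitting the exponent of $C_{\sfuc,G}$ into its $\sqrt{E}$-part and its $\sqrt{E}$-free part gives, for every $\phi\in\Ran(P_E)$,
\begin{equation*}
 \norm{\phi}_{L^2(\Lambda_L)}\le d_0\,\ee^{d_1\sqrt{E}}\,\norm{\phi}_{L^2(S_\delta(L))},
 \qquad d_0=\Bigl(\tfrac{G}{\delta}\Bigr)^{N(1+G^{4/3}\norm{V}_\infty^{2/3})/2},\ \ d_1=\tfrac{NG}{2}\ln\tfrac{G}{\delta}.
\end{equation*}
The decisive feature is the square-root exponent $\gamma=1/2<1$ of the energy, which is precisely what makes the iteration below summable.

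Since $H$ is self-adjoint, it is equivalent, and cleaner, to prove the dual final-state observability estimate
\begin{equation*}
 \norm{\ee^{-TH}\phi}_{L^2(\Lambda_L)}\le C_{con}\Bigl(\int_0^T\norm{\chi_{S_\delta(L)}\ee^{-tH}\phi}_{L^2(\Lambda_L)}^2\,\dd t\Bigr)^{1/2}
 \quad\text{for all }\phi\in L^2(\Lambda_L),
\end{equation*}
from which null-controllability with control cost $\le C_{con}$ follows by the standard duality between final-state observability and null-controllability, the control cost being bounded by the observability constant (see, e.g.,~\cite{TW09}). The core step is to establish this inequality by a telescoping over an increasing sequence of cut-off energies $E_k\nearrow\infty$ and a partition $0=t_0<t_1<\cdots\nearrow T$ of $(0,T)$ with block lengths $\tau_k=t_k-t_{k-1}$. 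On the $k$-th block I would apply the static inequality to the spectral piece of $\ee^{-tH}\phi$ below $E_k$ and pass, via a Gramian/short-time argument (the band being spectrally bounded, no dissipation is lost here), from the pointwise-in-$t$ spectral inequality to a bound by $\int_{t_{k-1}}^{t_k}\norm{\chi_{S_\delta(L)}\ee^{-tH}\phi}^2\,\dd t$, at a cost $\lesssim\tau_k^{-1/2}d_0\ee^{d_1\sqrt{E_k}}\ee^{\tau_k\norm{V}_\infty}$; the complementary high part $(\one-P_{E_k})$ is meanwhile damped by the dissipation estimate, so that across the partition every frequency band is captured inside the observation integral. Summing the blocks reassembles $\norm{\ee^{-TH}\phi}^2$ and bounds it by the full observation integral.

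The main obstacle is the quantitative balancing in this summation. One must choose $E_k$ and $\tau_k$ — typically $E_k$ geometric with a matching geometric partition of $T$ — so that the growth $\ee^{d_1\sqrt{E_k}}$ coming from the spectral inequality is beaten by the accumulated dissipation, rendering $\sum_k(\cdots)$ convergent; the borderline exponent $\gamma=1/2$ is exactly what permits this. Optimising the geometric ratios and carrying the constants through the series is where the explicit form
\begin{equation*}
 C_{con}=2\ee^{\norm{V}_\infty}\Bigl(\tfrac{G}{\delta}\Bigr)^{N(1+G^{4/3}\norm{V}_\infty^{2/3})/2}\ee^{c_\ast/T}
\end{equation*}
and the admissibility threshold $T'=T'(G,\delta,K_V)$ are produced: the prefactor $2\ee^{\norm{V}_\infty}$ absorbs $d_0$ together with the semigroup growth over $(0,T)$, while the factor $\ee^{c_\ast/T}$ with $c_\ast=\ln(G/\delta)^2(NG+4/\ln2)^2$ — which is of the order of $d_1^2$ — is the characteristic $\ee^{c/T}$ blow-up of the Lebeau--Robbiano cost as $T\to0$ in the square-root case, and $T'$ is chosen so that the geometric time partition fits into $(0,T]$ and the prefactor stays independent of $T$. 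Equivalently, once the two ingredients are brought into the normalized form above, one may invoke the abstract spectral-inequality-to-observability theorem underlying~\cite{NakicTTV-18} with $\gamma=1/2$ and the present $d_0,d_1$, and simply verify that it returns exactly these constants; I expect essentially all of the real work to sit in that constant-tracking, the structural mechanism being routine.
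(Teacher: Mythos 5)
This proposition is not proved in the paper at all: it is imported verbatim from \cite[Theorem~2.15]{NakicTTV-18} (the text merely says that it is ``obtained as a corollary'' of the spectral inequality in that reference), so there is no in-paper proof to compare against. That said, your outline correctly identifies the strategy that the cited reference uses: rewrite the spectral inequality~\eqref{eq:sfuc} in the multiplicative form $\norm{\phi}\le d_0\ee^{d_1\sqrt{E}}\norm{\phi}_{L^2(S_\delta(L))}$ with the exponent $\gamma=1/2<1$, prove the dual final-state observability estimate by a Lebeau--Robbiano iteration over geometric energy scales $E_k$ and a matching geometric partition of $(0,T)$, and convert observability into null-controllability with control cost equal to the observability constant via the duality recorded in Proposition~\ref{prop:TW} (equivalence of (a) and (b)) and Remark~\ref{rem:obs}. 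Your bookkeeping of the two semigroup bounds ($\norm{\ee^{-tH}}\le\ee^{t\norm{V}_\infty}$ globally, $\norm{\ee^{-tH}(\one-P_E)}\le\ee^{-tE}$ on the high-energy range) and of the split of $C_{\sfuc,G}$ into $d_0$ and $\ee^{d_1\sqrt{E}}$ is consistent with the stated constants.

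The genuine gap is that the proposal stops exactly where the content of the proposition begins. The statement to be proved is not ``observability follows from a $\gamma=1/2$ spectral inequality'' --- that is standard --- but the specific quantitative claim: the existence and form of the threshold $T'(G,\delta,K_V)$, the prefactor $2\ee^{\norm{V}_\infty}(G/\delta)^{N(1+G^{4/3}\norm{V}_\infty^{2/3})/2}$, and the exponent $c_\ast=\ln(G/\delta)^2(NG+4/\ln 2)^2$. Your proof defers all of this to ``optimising the geometric ratios and carrying the constants through the series'' and to an unspecified ``abstract spectral-inequality-to-observability theorem'' that one should ``simply verify \ldots returns exactly these constants.'' None of the choices that actually produce these numbers is made: the ratio of the geometric sequence $E_k$, the block lengths $\tau_k$, the balancing of $\ee^{d_1\sqrt{E_k}}$ against $\ee^{-\tau_{k+1}E_k}$, the origin of the factor $4/\ln 2$ (which encodes the chosen geometric ratio), and the condition defining $T'$. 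The ``Gramian/short-time argument'' converting the pointwise spectral inequality into a bound by $\int_{t_{k-1}}^{t_k}\norm{\chi_{S_\delta(L)}\ee^{-tH}\phi}^2\,\dd t$ is likewise only named, not carried out. As a roadmap the proposal is sound and faithful to how \cite{NakicTTV-18} proceeds, but as a proof of the stated estimate it is incomplete precisely in the part that the estimate consists of.
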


Our main result again allows to infer the controllability of the associated inhomogeneous equation on the whole of $\RR^d$.

\begin{corollary}\label{cor:NTTV}
 Let $S=S_\delta\subset\R^d$ be a $(G,\delta)$-equidistributed set, $\tilde u\in L^2(\R^d)$, and $(L_n)_n$ a sequence in $G\NN$ with
 $L_n\nearrow\infty$ as $n\to\infty$, $A\equiv0$, $V\in L^\infty(\RR^d)$, and $T \in (0,T']$ with $T'$ as in
 Proposition~\ref{thm:contcost}. Then, there is a subsequence $(L_{n_k})_k$ of $(L_n)_n$ and null-controls $f_{n_k}$ for the initial
 value problem~\eqref{eq:abstrCauchyH} on $\Omega=\Lambda_{L_{n_k}}$ with $\omega=\Lambda_{L_{n_k}}\cap S$ and
 $u_0=\tilde u|_{\Lambda_{L_{n_k}}}$ with corresponding mild solutions $u_{n_k}$ such that:
 \begin{enumerate}
  \renewcommand{\theenumi}{\roman{enumi}}
  \item $(f_{n_k})_k$ converges weakly in $L^2((0,T),L^2(\R^d))$ to a null-control for the initial value
        problem~\eqref{eq:abstrCauchyH} on $\Omega=\R^d$ with $\omega=S$ and $u_0=\tilde u$;
  \item the corresponding mild solution to~\eqref{eq:abstrCauchyH} on $\Omega=\R^d$ is the weak limit of $(u_{n_k})_k$ in
        $L^2((0,T),L^2(\R^d))$.
 \end{enumerate}
 In particular, the system~\eqref{eq:abstrCauchyH} on $\Omega=\R^d$ with $\omega=S$ is null-controllable, and the associated control
 cost $C_T$ satisfies
 \begin{equation*}
  C_T \le C_{con}
 \end{equation*}
 with $C_{con}$ as in Proposition~\ref{thm:contcost}.
\end{corollary}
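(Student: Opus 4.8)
The plan is to reduce Corollary~\ref{cor:NTTV} entirely to Corollary~\ref{cor:main} by verifying that its hypotheses are met. First I would observe that for the fixed $(G,\delta)$-equidistributed set $S=S_\delta$ and for $L\in G\N$ we have $S_\delta(L)=S_\delta\cap\Lambda_L=S\cap\Lambda_L$, so the control sets $\omega=\Lambda_{L_n}\cap S$ appearing here are exactly of the form $\Gamma_{L_n}\cap S$ with $\Gamma=\R^d$ (so $\Gamma_{L_n}=\Lambda_{L_n}$). Next, since $T\in(0,T']$ with $T'=T'(G,\delta,K_V)$ from Proposition~\ref{thm:contcost}, and since each $L_n\in G\N$, that proposition applies on each cube $\Lambda_{L_n}$: it furnishes, for the initial datum $u_0=\tilde u|_{\Lambda_{L_n}}$, a null-control $f_n\in L^2((0,T),L^2(S_\delta(L_n)))$ for the system~\eqref{eq:abstrCauchyH} on $\Omega=\Lambda_{L_n}$ with the norm bound
\begin{equation*}
 \norm{f_n}_{L^2((0,T),L^2(\Lambda_{L_n}\cap S))}\le C_{con}\cdot\norm{\tilde u|_{\Lambda_{L_n}}}_{L^2(\Lambda_{L_n})}.
\end{equation*}

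The crucial point is that the constant $C_{con}$ depends only on $d$, $G$, $\delta$, $\norm{V}_\infty$, and $T$, but \emph{not} on $L$; this uniformity is exactly what makes the whole scheme work. Combining the displayed bound with the monotonicity $\norm{\tilde u|_{\Lambda_{L_n}}}_{L^2(\Lambda_{L_n})}\le\norm{\tilde u}_{L^2(\R^d)}$, I obtain
\begin{equation*}
 \norm{f_n}_{L^2((0,T),L^2(\Lambda_{L_n}\cap S))}\le C_{con}\cdot\norm{\tilde u}_{L^2(\R^d)}=:c\quad\text{ for all }n\in\N,
\end{equation*}
which is precisely the uniform-boundedness hypothesis~\eqref{eq:mainCrit} of Corollary~\ref{cor:main}, with the $n$-independent constant $c=C_{con}\norm{\tilde u}_{L^2(\R^d)}$. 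At this stage all hypotheses of Corollary~\ref{cor:main} (applied with $\Gamma=\R^d$) are in place.

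I would then simply invoke Corollary~\ref{cor:main}: it yields a subsequence $(L_{n_k})_k$ along which $(f_{n_k})_k$ converges weakly in $L^2((0,T),L^2(\R^d))$ to a null-control $f\in L^2((0,T),L^2(\R^d\cap S))=L^2((0,T),L^2(S))$ for~\eqref{eq:abstrCauchyH} on $\Omega=\R^d$ with $\omega=S$ and $u_0=\tilde u$, which establishes~(i), and that the associated mild solution is the weak limit of the corresponding $(u_{n_k})_k$, which is~(ii). In particular the existence of $f$ shows that the system on $\R^d$ is null-controllable. Finally, for the control-cost estimate $C_T\le C_{con}$, I would note that by~\eqref{eq:limitBound} the weak limit inherits the bound $\norm{f}_{L^2((0,T),L^2(S))}\le c=C_{con}\norm{\tilde u}_{L^2(\R^d)}$; normalizing $\norm{\tilde u}_{L^2(\R^d)}=1$ and recalling the definition~\eqref{eq:controlCost} of $C_T$ as the supremum over unit initial data of the minimal null-control norm, this gives $C_T\le C_{con}$, since the minimal-norm null-control for $\tilde u$ has norm no larger than $\norm{f}$.

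I do not expect any genuine obstacle here, as the corollary is a direct application; the only point requiring a modicum of care is the bookkeeping ensuring that the hypotheses of the cited propositions are compatible with those of Corollary~\ref{cor:main}—specifically the identification $S_\delta(L_n)=\Lambda_{L_n}\cap S$, the restriction $L_n\in G\N$ so that Proposition~\ref{thm:contcost} is applicable on every cube, and above all the verification that $C_{con}$ is independent of $L_n$ so that~\eqref{eq:mainCrit} holds with a single constant. Once this uniformity is acknowledged, everything else follows mechanically from Corollary~\ref{cor:main} and the definition of the control cost.
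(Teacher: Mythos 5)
Your proposal is correct and follows exactly the route the paper intends (the paper leaves this corollary without an explicit written proof, presenting it as the direct combination of Proposition~\ref{thm:contcost} --- whose constant $C_{con}$ is uniform in $L$ --- with Corollary~\ref{cor:main}, just as it did for Corollary~\ref{cor:EV}). Your additional care about the identification $S_\delta(L_n)=\Lambda_{L_n}\cap S$, the restriction $L_n\in G\N$, and the normalization argument for $C_T\le C_{con}$ is exactly the right bookkeeping.
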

This recovers a result of \cite{NakicTTV-pre}. There actually a better estimate on
the control cost is provided.

\begin{remark}
 If, for general $A\in L_\text{loc}^2(\R^d,\R^d)$, we assume that $V\geq 0$, then the semigroup associated with
 $H_{\Lambda_L}(A, V)$ is contractive. In this case, abstract arguments as in~\cite{TenenbaumT-07} or~\cite{BeauchardPS-18} allow to
 reduce the question of null-controllability to the proof of a spectral inequality or uncertainty relation. Thus, if we find a set
 $S$ such that the restriction to $\Lambda_L\cap S$ allows for a scale-independent uncertainty relation of the type~\eqref{eq:sfuc},
 then it automatically follows that the system~\eqref{eq:abstrCauchyH} on $\Omega=\R^d$ with $\omega=S$ is null-controllable.
\end{remark}

\section{Approximation of Schr\"odinger semigroups}\label{sec:semigroups}

In the situation of Subsection~\ref{subsec:main}, denote $H=H_\Gamma(A,V)$ and $H_L=H_{\Gamma\cap\Lambda_L}(A,V)$. Since these
operators are defined on different Hilbert spaces, namely $L^2(\Gamma)$ and $L^2(\Gamma\cap\Lambda_L)$, respectively, we need a
notion of extension in order to compare the associated semigroups. To this end, we identify $H_L$ with the direct sum $H_L\oplus0$
on $L^2(\Gamma)$ with respect to the orthogonal decomposition
$L^2(\Gamma)=L^2(\Gamma\cap\Lambda_L)\oplus L^2(\Gamma\setminus\Lambda_L)$. This agrees with the understanding that every function
in $L^2(\Gamma\cap\Lambda_L)$ is trivially extended to a function in $L^2(\Gamma)$. In this sense, the subspace
$L^2(\Gamma\cap\Lambda_L)\subset L^2(\Gamma)$ is a reducing subspace for the self-adjoint operator $H_L$ on $L^2(\Gamma)$
(cf.~\cite[Definition~1.8]{Schm12}) and therefore also for the exponential $\ee^{-tH_L}=\ee^{-tH_L}\oplus I$ for all $t\ge 0$, see,
e.g., \cite[Satz~8.23]{Wei00}. In particular, $\ee^{-tH_L}$ is a bounded self-adjoint operator on the whole of $L^2(\Gamma)$, and
one has $\ee^{-tH_L}f=0$ on $\Gamma\setminus\Lambda_L$ for all $f\in L^2(\Gamma\cap\Lambda_L)$.

\begin{lemma}\label{lem:semigroup}
 Let $R>0$, $u_0\in L^2(\Gamma\cap\Lambda_R)\subset L^2(\Gamma)$, and $t>0$. Then, there exists a constant $C=C(t,d,V_-)>0$ such
 that for every $L\ge 2R$ one has:
 \begin{enumerate}
  \renewcommand{\theenumi}{\alph{enumi}}
  \item $\norm{(\ee^{-tH}-\ee^{-tH_L})u_0}_{L^2(\Gamma\cap\Lambda_L)}^2
        \le C\exp\bigl(-\frac{L^2}{32t}\bigr)\norm{u_0}_{L^2(\Gamma)}^2$;
  \item $\norm{\ee^{-tH}u_0}_{L^2(\Gamma\setminus\Lambda_L)}^2
        \le C\exp\bigl(-\frac{L^2}{32t}\bigr)\norm{u_0}_{L^2(\Gamma)}^2$;
  \item $\norm{(\ee^{-tH}-\ee^{-tH_L})u_0}_{L^2(\Gamma)}^2
        \le 2C\exp\bigl(-\frac{L^2}{32t}\bigr)\norm{u_0}_{L^2(\Gamma)}^2$.
 \end{enumerate}

 \begin{proof}
  By a standard density argument, it clearly suffices to consider the particular case of
  $u_0\in L^4(\Gamma\cap\Lambda_R)\subset L^2(\Gamma\cap\Lambda_R)$, which we assume throughout the proof.

  Important steps of the following argument are inspired by the poof of Theorem~1 in~\cite{HKNSV06}.

  We denote by $b=(b_t)_{t\ge 0}$ the standard Brownian motion in $\R^d$. Moreover, given $x\in\Gamma$, $\E_x$ stands for the
  expectation with respect to the associated Wiener measure $\P_x$ in $x$.

  Let $L\ge2R$. Then, the Feynman-Kac-It\^o formula in the version from~\cite[Korollar~3.3]{H96} states that for almost every
  $x\in\Gamma$ one has
  \begin{equation*}
   (\ee^{-tH}u_0)(x)=\E_x\bigl\{u_0(b_t)\ee^{-\ii S_t(A)-\int_0^t V(b_s)\,\dd s}
    \cdot\one_{\{\forall s\in[0,t]\,:\,b_s\in\Gamma\}}(b)\bigr\},
  \end{equation*}
  where $S_t(A)$ is a real-valued stochastic process. A variant of this formula under stronger regularity assumptions on the vector
  potential $A$ can also be found in~\cite[Proposition~2.3]{BHL00}; see also the references therein.

  In case of $x\in\Gamma\cap\Lambda_L$, the analogous formula for $H_L$ reads
  \begin{equation*}
   (\ee^{-tH_L}u_0)(x)=\E_x\bigl\{u_0(b_t)\ee^{-\ii S_t(A)-\int_0^t V(b_s)\,\dd s}
    \cdot\one_{\{\forall s\in[0,t]\,:\,b_s\in\Gamma\cap\Lambda_L\}}(b)\bigr\}.
  \end{equation*}
  Consequently, in view of $u_0=0$ on $\Gamma\setminus\Lambda_R$, for almost every $x\in\Gamma\cap\Lambda_L$ we have
  \begin{equation*}
   \bigl((\ee^{-tH}-\ee^{-tH_L})u_0\bigr)(x)=
   \E_x\bigl\{u_0(b_t)\ee^{-\ii S_t(A)-\int_0^t V(b_s)\,\dd s}\Phi_{\Gamma,L,t}(b)\bigr\}
  \end{equation*}
  with $\Phi_{\Gamma,L,t}(b):=\one_{\{\forall s\in[0,t]\,:\,b_s\in\Gamma,\,b_t\in\Lambda_R,\,
  \exists s\in[0,t]\,:\,b_s\notin\Lambda_L\}}(b)$. Thus, setting
  \begin{equation*}
   \Phi_{L,t}(b):=\one_{\{\,b_t\in\Lambda_R,\,\exists s\in[0,t]\,:\,b_s\notin\Lambda_L\}}(b)\ge\Phi_{\Gamma,L,t}(b),
  \end{equation*}
  this gives
  \begin{equation}\label{eq:semigroupDiff}
   \begin{aligned}
    \abs{\bigl((\ee^{-tH}-\ee^{-tH_L})u_0\bigr)(x)}
    &\le \E_x\bigl\{\abs{u_0(b_t)} \ee^{\int_0^t V_-(b_s)\,\dd s}\Phi_{L,t}(b)\bigr\},
   \end{aligned}
  \end{equation}
  where we have taken into account that $S_t(A)$ is real valued. Similarly, for almost every $x\in\Gamma\setminus\Lambda_L$ we
  obtain that
  \begin{equation}\label{eq:semigroupPlain}
   \begin{aligned}
    \abs{(\ee^{-tH}u_0)(x)}
    &\le \E_x\bigl\{\abs{u_0(b_t)}\ee^{\int_0^t V_-(b_s)\,\dd s}\cdot\one_{\{b_t\in\Lambda_R\}}(b) \bigr\}\\
    &= \E_x\bigl\{\abs{u_0(b_t)}\ee^{\int_0^t V_-(b_s)\,\dd s}\Phi_{L,t}(b) \bigr\},
   \end{aligned}
  \end{equation}
  where for the last equality we have taken into account that $\P_x$-almost surely $b_0=x\notin\Lambda_L$.

  Extending $u_0$ trivially to the whole of $\R^d$, the assumption $u_0\in L^4(\Gamma\cap\Lambda_R)$ implies that
  $\abs{u_0}^2\in L^2(\R^d)$. Hence,
  \begin{equation*}
   \E_x\bigl\{\abs{u_0(b_t)}^2\ee^{2\int_0^t V_-(b_s)\,\dd s}\bigr\}=\bigl(\ee^{-tH_{\R^d}(0,-2V_-)}\abs{u_0}^2\bigr)(x)
  \end{equation*}
  by the standard Feynman-Kac formula on $\R^d$, see, e.g.,~\cite[Theorem~A.2.7]{Sim82}. The Cauchy-Schwarz inequality with respect
  to $\E_x$ therefore yields
  \begin{equation}\label{eq:expect}
   \begin{aligned}
    \E_x\{\abs{&u_0(b_t)}\ee^{\int_0^t V_-(b_s)\,\dd s}\Phi_{L,t}(b)\}^2\\
    &\le\bigl(\ee^{-tH_{\R^d}(0,-2V_-)}\abs{u_0}^2\bigr)(x)\cdot\E_x\{\Phi_{L,t}^2(b)\}\\
    &=\bigl(\ee^{-tH_{\R^d}(0,-2V_-)}\abs{u_0}^2\bigr)(x)\cdot\P_x\{b_t\in\Lambda_R,\,\exists s\in[0,t]:b_s\notin\Lambda_L\}.
   \end{aligned}
  \end{equation}

  Thus, in view of~\eqref{eq:semigroupDiff},~\eqref{eq:semigroupPlain}, and~\eqref{eq:expect}, it remains to estimate the
  probability $\P_x\{b_t\in\Lambda_R,\,\exists s\in[0,t]:b_s\notin\Lambda_L\}$ for almost every $x\in\Gamma$. In order to do so,
  observe that $b_t\in\Lambda_R$ and $b_s\notin\Lambda_L$ for some $s\in[0,t]$ can only happen if for some coordinate
  $j\in\{1,\dots,d\}$ the one-dimensional Brownian motion $(b_s^j)_{s\ge0}$ obtained as the $j$-coordinate of $(b_s)_{s\ge0}$
  satisfies
  \begin{equation*}
   \abs{b_t^j}<\frac{R}{2} \quad\text{ and }\quad \max_{0\le s\le t}\abs{b_s^j}\ge\frac{L}{2}.
  \end{equation*}
  In particular, this requires that
  \begin{equation*}
   \max_{0\le s\le t} \abs{b_t^j-b_s^j} \ge \frac{L-R}{2}\ge \frac{L}{4}.
  \end{equation*}
  The probability for this to happen can for each coordinate $j$ be estimated as
  \begin{align*}
   \P_x\Bigl\{\max_{0\le s\le t}\abs{b_t^j-b_s^j}\ge\frac{L}{4}\Bigr\}
   &\le 2\P_0\Bigl\{\max_{0\le s\le t}(b_t^j-b_s^j)\ge\frac{L}{4}\Bigr\}\\
   &= 2\P_0\Bigl\{\max_{0\le s\le t}b_{t-s}^j\ge\frac{L}{4}\Bigr\}
   = 2\P_0\Bigl\{\max_{0\le s\le t}b_s^j\ge\frac{L}{4t}t\Bigr\}\\
   &\le 2\exp\Bigl(-\Bigl(\frac{L}{4t}\Bigr)^2\cdot \frac{t}{2}\Bigr)=2\exp\Bigl(-\frac{L^2}{32t}\Bigr),
  \end{align*}
  where for the last line we applied a standard exponential inequality for the one-dimensional Brownian motion, see,
  e.g.,~\cite[Satz~46.5]{Bau91}. This yields
  \begin{equation}\label{eq:prob}
   \P_x\{b_t\in\Lambda_R,\,\exists s\in[0,t]:b_s\notin\Lambda_L\} \le 2d\exp\Bigl(-\frac{L^2}{32t}\Bigr).
  \end{equation}

  Recall (see, e.g.,~\cite[Theorem~2.1]{CFKS87}) that $\ee^{-tH_{\R^d}(0,-2V_-)}$ can be extended to a bounded operator from
  $L^1(\R^d)$ to $L^1(\R^d)$, that is,
  \begin{equation*}
   c(t,d,V_-):=\norm{\ee^{-tH_{\R^d}(0,-2V_-)}}_{L^1(\R^d)\to L^1(\R^d)}<\infty.
  \end{equation*}
  Hence, by combining~\eqref{eq:semigroupDiff},~\eqref{eq:expect}, and~\eqref{eq:prob} we obtain
  \begin{equation*}
   \abs{\bigl((\ee^{-tH}-\ee^{-tH_L})u_0\bigr)(x)}^2\le 2d\exp\Bigl(-\frac{L^2}{32t}\Bigr)
   \bigl(\ee^{-tH_{\R^d}(0,-2V_-)}\abs{u_0}^2\bigr)(x)
  \end{equation*}
  for almost every $x\in\Gamma\cap\Lambda_L$, and integrating over $\Gamma\cap\Lambda_L$ results in
  \begin{align*}
   \norm{(\ee^{-tH}-\ee^{-tH_L})u_0}_{L^2(\Gamma\cap\Lambda_L)}^2
   &\le 2d\exp\Bigl(-\frac{L^2}{32t}\Bigr) \norm{\ee^{-tH_{\R^d}(0,-2V_-)}\abs{u_0}^2}_{L^1(\R^d)}\\
   &\le 2dc(t,d,V_-)\exp\Bigl(-\frac{L^2}{32t}\Bigr) \norm{\abs{u_0}^2}_{L^1(\R^d)}\\
   &= 2dc(t,d,V_-)\exp\Bigl(-\frac{L^2}{32t}\Bigr) \norm{u_0}_{L^2(\Gamma)}^2.
  \end{align*}
  This proves part (a) with $C(t,d,V_-)=2dc(t,d,V_-)$.

  Part (b) is proved in an analogous way by combining~\eqref{eq:semigroupPlain},~\eqref{eq:expect}, and~\eqref{eq:prob} for
  $x\in\Gamma\setminus\Lambda_L$ and integrating over $\Gamma\setminus\Lambda_L$.

  Finally, taking into account that $\ee^{-tH_L}u_0=0$ on $\Gamma\setminus\Lambda_L$, part (c) is a direct consequence of (a) and
  (b) and Pythagoras' identity. This completes the proof of the lemma.
 \end{proof}
\end{lemma}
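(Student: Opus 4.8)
The plan is to represent both semigroups probabilistically through the Feynman--Kac--It\^o formula and to isolate the discrepancy between $\ee^{-tH}$ and $\ee^{-tH_L}$ in terms of Brownian paths that exit the large cube $\Lambda_L$ before time $t$ yet are pinned back into $\mathrm{supp}\,u_0\subset\Lambda_R$ at time $t$. First I would reduce, by a density argument, to initial data $u_0\in L^4(\Gamma\cap\Lambda_R)$, so that $\abs{u_0}^2\in L^2(\R^d)$ and all the ensuing Feynman--Kac expressions are unambiguous.

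Next I would write, for almost every $x$, both $(\ee^{-tH}u_0)(x)$ and $(\ee^{-tH_L}u_0)(x)$ as Wiener expectations $\E_x$ over Brownian paths $b$, each weighted by the magnetic phase $\ee^{-\ii S_t(A)}$, the potential factor $\ee^{-\int_0^t V(b_s)\,\dd s}$, and the indicator that the path remains in $\Gamma$, respectively in $\Gamma\cap\Lambda_L$, on $[0,t]$. Since $u_0$ is supported in $\Lambda_R$, subtracting these two representations leaves precisely the contribution of paths with $b_t\in\Lambda_R$ that nevertheless leave $\Lambda_L$ at some intermediate time. Bounding the magnetic phase by $1$ (as $S_t(A)$ is real) and replacing $\ee^{-\int_0^t V}$ by $\ee^{\int_0^t V_-}$, I would then apply the Cauchy--Schwarz inequality in $\E_x$ to factor the bound into $\E_x\{\abs{u_0(b_t)}^2\ee^{2\int_0^t V_-(b_s)\,\dd s}\}$, which the scalar Feynman--Kac formula identifies with $(\ee^{-tH_{\R^d}(0,-2V_-)}\abs{u_0}^2)(x)$, times the probability of the ``escape'' event.

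The heart of the matter is estimating this escape probability $\P_x\{b_t\in\Lambda_R,\ \exists\, s\in[0,t]:b_s\notin\Lambda_L\}$. The key observation is that such an event forces, in at least one coordinate $j$, the oscillation $\max_{0\le s\le t}\abs{b_t^j-b_s^j}$ to reach $(L-R)/2\ge L/4$; the reflection principle together with the standard exponential maximal inequality for one-dimensional Brownian motion then bounds each coordinate's contribution by $2\exp(-L^2/32t)$, and a union bound over $j=1,\dots,d$ gives the factor $2d\exp(-L^2/32t)$, uniformly in $x$.

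Finally I would integrate the pointwise estimate over $\Gamma\cap\Lambda_L$ and use that $\ee^{-tH_{\R^d}(0,-2V_-)}$ is bounded $L^1(\R^d)\to L^1(\R^d)$---which holds because $V_-$ is in the Kato class---to absorb $\norm{\abs{u_0}^2}_{L^1(\R^d)}=\norm{u_0}_{L^2(\Gamma)}^2$ and read off (a) with $C=2d\,\norm{\ee^{-tH_{\R^d}(0,-2V_-)}}_{L^1\to L^1}$. Part (b) follows verbatim, integrating the analogous pointwise bound over $\Gamma\setminus\Lambda_L$ instead; the only change is that the starting point $x\notin\Lambda_L$ is already outside the cube, which if anything simplifies the indicator bookkeeping. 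Part (c) is then immediate from (a), (b), and Pythagoras, since $\ee^{-tH_L}u_0$ vanishes on $\Gamma\setminus\Lambda_L$. I expect the genuine obstacle to be the Brownian escape estimate: phrasing the event through a single-coordinate oscillation and invoking exactly the right maximal inequality, while everything else---the Feynman--Kac representations, Cauchy--Schwarz, and the Kato-class $L^1$ bound---is fairly standard machinery.
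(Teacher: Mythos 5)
Your proposal is correct and follows essentially the same route as the paper's proof: Feynman--Kac--It\^o representations, isolation of the escape event via the support of $u_0$, Cauchy--Schwarz against the scalar semigroup $\ee^{-tH_{\R^d}(0,-2V_-)}$, the coordinatewise oscillation bound $\max_{0\le s\le t}\abs{b_t^j-b_s^j}\ge L/4$ with the exponential maximal inequality and a union bound, and finally the Kato-class $L^1\to L^1$ estimate plus Pythagoras for part (c). No substantive differences.
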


\begin{remark}\label{rem:const}
 One can show that there are constants $c_0,c_1>0$, depending only on $d$ and $V_-$, such that
 \begin{equation*}
  c(t,d,V_-)=\norm{\ee^{-tH_{\R^d}(0,-2V_-)}}_{L^1(\R^d)\to L^1(\R^d)}\le c_0\ee^{tc_1}
 \end{equation*}
 and, in turn,
 \begin{equation*}
  C(t,d,V_-)=2d\norm{\ee^{-tH_{\R^d}(0,-2V_-)}}_{L^1(\R^d)\to L^1(\R^d)}\le 2dc_0\ee^{tc_1}.
 \end{equation*}
 Indeed, it follows from~\cite[Theorem~4.7]{AS82} and a duality argument that
 \begin{equation*}
  \lim_{t\searrow0}\norm{\ee^{-tH_{\R^d}(0,-2V_-)}}_{L^1(\R^d)\to L^1(\R^d)}=1.
 \end{equation*}
 Hence, we see from the semigroup property that the constants $c_0,c_1$ can be chosen as
 $c_0:=\sup_{0\le s\le \delta}\norm{\ee^{-sH_{\R^d}(0,-2V_-)}}_{L^1(\R^d)\to L^1(\R^d)}<\infty$ and
 \begin{equation*}
  c_1:=\frac{1}{\delta}\ln\norm{\ee^{-\delta H_{\R^d}(0,-2V_-)}}_{L^1(\R^d)\to L^1(\R^d)}
 \end{equation*}
 for some $\delta>0$.

 Note that in case of $V_-=0$ one has $c(t,d,0)\le 1$ and, in turn,
 \begin{equation*}
  C(t,d,0)\le 2d
 \end{equation*}
 since $-H_{\R^d}(0,0)=\Delta_{\R^d}$ is known to generate a contraction semigroup on $L^1(\R^d)$, see,
 e.g.,~\cite[Theorems~1.4.1 and~1.3.5]{Dav89}.
\end{remark}

\begin{remark}\label{rem:genExhaust}
 The explicit form of dependence on the length scale $L$ in the estimates of Lemma~\ref{lem:semigroup} is due to the specific choice
 of the exhaustion $(\Lambda_L)_{L>0}$ of $\R^d$. If one is interested in more qualitative statements only, also more general
 exhaustions $(\Omega_n)_{n\in\N}$ of $\R^d$ can be considered: Let $\Omega_n\subset\R^d$ be open with $\Omega_n\subset\Omega_{n+1}$
 for all $n\in\N$, $\bigcup_{n\in\N}\Omega_n=\R^d$, and suppose that for every $k\in\N$ one has
 \begin{equation*}
  d_k(n):=\dist(\Omega_k,\R^d\setminus\Omega_n)\to \infty\quad\text{ as }\quad n\to\infty.
 \end{equation*}
 Under the assumption $u_0\in L^4(\Gamma\cap\Omega_k)$ for some $k\in\N$, the condition $b_t\in\Omega_k$ and $b_s\notin\Omega_n$ for
 some $s\in[0,t]$ from the proof of Lemma~\ref{lem:semigroup} (with $\Lambda$ replaced by $\Omega$) then requires that
 $\max_{0\le s\le t}\abs{b_t-b_s}\ge d_k(n)$ and, thus,
 \begin{equation*}
  \max_{0\le s\le t}\abs{b_t^j-b_s^j}\ge\frac{d_k(n)}{\sqrt{d}}
 \end{equation*}
 for some coordinate $j$. The rest of the reasoning stays exactly the same, but with the term $\exp(-L^2/(32t))$ replaced by
 $\exp(-d_k^2(n)/(2dt))$.
\end{remark}

Lemma~\ref{lem:semigroup} in conjunction with Remark~\ref{rem:genExhaust} yields the following result.

\begin{corollary}\label{cor:semigroup}
 Let $(\Omega_n)_n$ be an exhaustion of $\R^d$ as in Remark~\ref{rem:genExhaust}. Then, the self-adjoint operators $H=H_\Gamma(A,V)$
 and $H_n=H_{\Gamma\cap\Omega_n}(A,V)$ have a common lower bound independent of $n$, and one has $\ee^{-tH_n}\to\ee^{-tH}$ strongly
 in $L^2(\Gamma)$ as $n\to\infty$ for all $t>0$.

 \begin{proof}
  That $H$ and $H_n$ have a common lower bound independent of $n$ is an immediate consequence of the fact that the quadratic form
  for $H$ extends the one for $H_n$. Let $a\in\R$ be such a common lower bound.

  Let $t>0$, $g\in L^2(\Gamma)$, $\eps>0$, and choose $k\in\N$ with
  \begin{equation*}
   \norm{g-g\chi_{\Gamma\cap\Omega_k}}_{L^2(\Gamma)}<\eps\ee^{ta}/4,
  \end{equation*}
  where $\chi_{\Gamma\cap\Omega_k}\colon\Gamma\to\R$ denotes the characteristic function for $\Gamma\cap\Omega_k$. It then follows
  from part (c) of Lemma~\ref{lem:semigroup} and Remark~\ref{rem:genExhaust} that for some $n_0>k$ and all $n\ge n_0$ one has
  \begin{equation*}
   \norm{(\ee^{-tH}-\ee^{-tH_n})g\chi_{\Gamma\cap\Omega_k}}_{L^2(\Gamma)}<\frac{\eps}{2}
  \end{equation*}
  and, therefore,
  \begin{align*}
   \norm{(\ee^{-tH}-\ee^{-tH_n})g}_{L^2(\Gamma)}
   &\le \norm{(\ee^{-tH}-\ee^{-tH_n})(g-g\chi_{\Gamma\cap\Omega_k})}_{L^2(\Gamma)} + \frac{\eps}{2}\\
   &\le 2\ee^{-ta}\norm{g-g\chi_{\Gamma\cap\Omega_k}}_{L^2(\Gamma)} + \frac{\eps}{2} < \eps.
  \end{align*}
  This completes the proof.
 \end{proof}%
\end{corollary}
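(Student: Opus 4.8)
The plan is to deduce the statement from Lemma~\ref{lem:semigroup}(c), in its general-exhaustion form provided by Remark~\ref{rem:genExhaust}, via a standard density argument built around a uniform operator-norm bound. First I would settle the common lower bound. The quadratic form associated with $H=H_\Gamma(A,V)$ has form core $C_c^\infty(\Gamma)$, while the one associated with $H_n=H_{\Gamma\cap\Omega_n}(A,V)$ has form core $C_c^\infty(\Gamma\cap\Omega_n)$; since $C_c^\infty(\Gamma\cap\Omega_n)\subset C_c^\infty(\Gamma)$ and both forms are given by the same differential expression, the form for $H$ extends the form for $H_n$, where functions on $\Gamma\cap\Omega_n$ are identified with their trivial extensions in $L^2(\Gamma)$. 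Passing to the infimum of the Rayleigh quotient over the larger form domain can only decrease it, so the lower bound $a$ of $H$ is dominated by the lower bound of each $H_n$; hence $a$ is a common lower bound. Replacing $a$ by $\min(a,0)$ if necessary, I may assume $a\le 0$, so that $\ee^{-ta}\ge 1$ and, by the spectral theorem together with the $\oplus\,0$ convention on $L^2(\Gamma\setminus\Omega_n)$, one has the uniform bounds $\norm{\ee^{-tH}}\le\ee^{-ta}$ and $\norm{\ee^{-tH_n}}\le\ee^{-ta}$ as operators on $L^2(\Gamma)$.

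For the strong convergence I would fix $t>0$, $g\in L^2(\Gamma)$, and $\eps>0$, exploiting that $\bigcup_k L^2(\Gamma\cap\Omega_k)$ is dense in $L^2(\Gamma)$ because $\bigcup_k\Omega_k=\R^d$. Concretely, choose $k$ so large that $\norm{g-g\chi_{\Gamma\cap\Omega_k}}_{L^2(\Gamma)}<\eps\ee^{ta}/4$, and split
\begin{equation*}
 (\ee^{-tH}-\ee^{-tH_n})g
 =(\ee^{-tH}-\ee^{-tH_n})(g-g\chi_{\Gamma\cap\Omega_k})
 +(\ee^{-tH}-\ee^{-tH_n})g\chi_{\Gamma\cap\Omega_k}.
\end{equation*}
The first term is controlled purely by the uniform bound: its norm is at most $2\ee^{-ta}\norm{g-g\chi_{\Gamma\cap\Omega_k}}<\eps/2$, independently of $n$. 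For the second term the function $g\chi_{\Gamma\cap\Omega_k}$ is supported in the now fixed set $\Gamma\cap\Omega_k$, so Lemma~\ref{lem:semigroup}(c) in the form of Remark~\ref{rem:genExhaust} applies and bounds its norm by $\bigl(2C\exp(-d_k^2(n)/(2dt))\bigr)^{1/2}\norm{g\chi_{\Gamma\cap\Omega_k}}$. Since $d_k(n)\to\infty$ as $n\to\infty$ with $k$ held fixed, this vanishes, so there is $n_0$ with the second term $<\eps/2$ for all $n\ge n_0$; the triangle inequality then gives $\norm{(\ee^{-tH}-\ee^{-tH_n})g}<\eps$.

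The delicate point, and the one I expect to be the main obstacle, is the order of the limits. Lemma~\ref{lem:semigroup} only yields convergence when the initial datum is supported in a \emph{fixed} compact piece $\Gamma\cap\Omega_k$, and the factor $\exp(-d_k^2(n)/(2dt))$ degrades if $k$ is allowed to grow together with $n$. The argument therefore hinges on first choosing $k=k(\eps)$ to make the tail $g-g\chi_{\Gamma\cap\Omega_k}$ small, and only afterwards sending $n\to\infty$; the tail itself is handled not by any convergence statement but solely by the $n$-independent operator-norm bound $\ee^{-ta}$ arising from the common lower bound. This is precisely why establishing that uniform bound is the crucial preliminary step rather than an incidental remark.
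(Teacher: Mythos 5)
Your proposal is correct and follows essentially the same route as the paper: establish the common lower bound $a$ from the form extension, split $g$ into $g\chi_{\Gamma\cap\Omega_k}$ plus a small tail, control the tail by the uniform bound $2\ee^{-ta}$ and the compactly supported part by Lemma~\ref{lem:semigroup}(c) via Remark~\ref{rem:genExhaust}, fixing $k$ before sending $n\to\infty$. The paper's proof is exactly this argument, so no further comparison is needed.
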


\begin{remark}\label{rem:Dan}
 In the particular case of $A=0$ and $V=0$, Corollary~\ref{cor:semigroup} can also be obtained in a more abstract way without the
 use of Lemma~\ref{lem:semigroup}:

 By~\cite[Proposition~7.1 and Theorem~3.3]{Dan03}, for every $\lambda$ with $\Re\lambda>0$ the resolvents
 $(\lambda-H_{\Gamma\cap\Omega_n}(0,0))^{-1}$ converge strongly to $(\lambda-H_\Gamma(0,0))^{-1}$. The claim of the corollary then
 follows from classic results on strongly continuous semigroups, see, e.g.,~\cite[Theorem~3.42]{Pazy83}.

 However, for the general case discussed in Corollary~\ref{cor:semigroup} we have not found a reference in the literature.
\end{remark}

\section{Continuous dependence on inhomogeneity and proof of Theorem~\ref{thm:main}}\label{sec:abstrCauchy}

For this whole section, we introduce the following setting as a general framework.
\begin{hypothesis}\label{hyp:controlConv}
 Let $\cH$ and $\cU$ be Hilbert spaces, and let $H,H_n$, $n\in\N$, be lower semibounded self-adjoint operators on $\cH$ with a
 common lower bound $a\in\R$
 such that $(\ee^{-tH_n})_n$ converges strongly to $\ee^{-tH}$ for all $t>0$. Moreover, let $B,B_n$, $n\in\N$, be bounded operators
 from $\cU$ to $\cH$ such that $(B_n)_n$ und $(B_n^*)_n$ converge strongly to $B$ and $B^*$, respectively. Let $T>0$, and let
 $\cB^T \colon L^2((0,T),\cU) \to \cH$ be the controllability map associated to the system~\eqref{eq:abstrCauchy}. Finally, let
 $\cB_n^T \colon L^2((0,T),\cU) \to \cH$, $n\in\N$, be the controllability map associated to the corresponding system with $H$ and
 $B$ replaced by $H_n$ and $B_n$, respectively.
\end{hypothesis}

Recall that $L^2((0,T),\cH)$ is a Hilbert space with respect to the inner product
\begin{equation*}
 \langle g,h \rangle_{L^2((0,T),\cH)}=\int_0^T \langle g(t),h(t)\rangle_\cH\,\dd t\,,\quad g,h\in L^2((0,T),\cH).
\end{equation*}
Moreover, given $h\in L^2((0,T),\cH)$ and a measurable subset $I\subset(0,T)$, we understand the integral $\int_I h(t)\,\dd t\in\cH$
in the weak sense, that is, $\int_I h(t)\,\dd t$ denotes by Riesz' theorem the unique element in $\cH$ for which
\begin{equation*}
 \Bigl\langle g,\int_I h(t)\,\dd t\Bigr\rangle_\cH = \int_I\langle g,h(t)\rangle_\cH\,\dd t
 \quad\text{ for all }\quad g\in\cH.
\end{equation*}
In particular, one has
\begin{equation*}
 \norm{\int_I h(t)\,\dd t}_\cH \le \int_I \norm{h(t)}_\cH\,\dd t.
\end{equation*}
The same applies for the Hilbert space $\cU$ instead of $\cH$.

Our first convergence result addresses the strong convergence of the controllability maps and their adjoints.
\begin{lemma}\label{lem:controllabilitymap}
 Assume Hypothesis~\ref{hyp:controlConv}. Then, $(\cB_n^T)_n$ and $((\cB_n^T)^*)_n$ converge strongly to $\cB^T$ and $(\cB^T)^*$,
 respectively.

 \begin{proof}
  First, observe that $\norm{\ee^{-(T-s)H_n}} \le \ee^{T\abs{a}}$ for all $n$ and $0 < s < T$. Moreover, the sequence $(B_n)_n$ is
  uniformly bounded as a strongly convergent sequence, and $(\ee^{-(T-s)H_n}B_n)_n$ converges strongly to $\ee^{-(T-s)H}B$ for
  $0<s<T$. Hence, for every $f\in L^2((0,T),\cU)$ we conclude that
  \begin{equation*}
   \norm{ \cB_n^T f - \cB^T f}_\cH \le \int_0^T \norm{ \ee^{-(T-s)H_n}B_n f(s) - \ee^{-(T-s)H}B f(s) }_\cH \,\dd s
   \xrightarrow[n\to\infty]{}0
  \end{equation*}
  by Lebegue's dominated convergence theorem. This proves the claim for $(\cB_n^T)_n$.

  In order to show the claim for $((\cB_n^T)^*)_n$, we observe that
  \begin{equation*}
   (\cB^T)^* = B^* \ee^{-(T-\,\cdot)H} \quad\text{ and }\quad
   (\cB_n^T)^* = B_n^* \ee^{-(T-\,\cdot)H_n}
  \end{equation*}
  for every $n$. Indeed, for $f\in L^2((0,T),\cU)$ and $g\in\cH$ we compute
  \begin{align*}
   \langle g,\cB^T f \rangle_\cH
   &= \int_0^T \langle g,\ee^{-(T-s)H}B f(s) \rangle_\cH\,\dd s
      = \int_0^T \langle B^*\ee^{-(T-s)H} g, f(s) \rangle_\cU\,\dd s\\
   &= \langle B^*\ee^{-(T-\,\cdot)H} g, f \rangle_{L^2((0,T),\cU)}
  \end{align*}
  and analogoulsy for $\cB_n^T$. Now, the claim for $((\cB_n^T)^*)_n$ follows in the same way as above.
 \end{proof}%
\end{lemma}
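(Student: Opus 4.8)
The plan is to prove each of the two strong-convergence statements by reducing it, for a fixed argument, to pointwise-in-$s$ convergence of the relevant integrand combined with dominated convergence over the finite interval $(0,T)$.

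First I would treat $\cB_n^T\to\cB^T$. Fixing $f\in L^2((0,T),\cU)$, the triangle inequality for the (weak) integral gives
\begin{equation*}
 \norm{\cB_n^T f-\cB^T f}_\cH
 \le\int_0^T\norm{\ee^{-(T-s)H_n}B_nf(s)-\ee^{-(T-s)H}Bf(s)}_\cH\,\dd s.
\end{equation*}
For pointwise convergence of the integrand I would split it as $\ee^{-(T-s)H_n}(B_n-B)f(s)+(\ee^{-(T-s)H_n}-\ee^{-(T-s)H})Bf(s)$: the first summand is controlled by the uniform bound $\norm{\ee^{-(T-s)H_n}}\le\ee^{T\abs{a}}$ (from the common lower bound $a$) together with $B_n\to B$ strongly, and the second summand tends to $0$ since $(\ee^{-(T-s)H_n})_n\to\ee^{-(T-s)H}$ strongly and $Bf(s)$ is a fixed vector. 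To pass the limit through the integral I would dominate: a strongly convergent sequence of operators is uniformly bounded, so $M:=\sup_n\norm{B_n}<\infty$ and the integrand is bounded by $\ee^{T\abs{a}}(M+\norm{B})\norm{f(s)}_\cU$, which lies in $L^1((0,T))$ because $(0,T)$ is finite and $f\in L^2\subset L^1$.

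For the adjoints I would first record, by moving operators across the pairing $\langle g,\cB^T f\rangle_\cH$, the representation $(\cB^T)^*g=B^*\ee^{-(T-\,\cdot)H}g$ and its analogue for $\cB_n^T$, so that the claim becomes
\begin{equation*}
 \int_0^T\norm{B_n^*\ee^{-(T-s)H_n}g-B^*\ee^{-(T-s)H}g}_\cU^2\,\dd s\xrightarrow[n\to\infty]{}0.
\end{equation*}
The same scheme applies: pointwise convergence follows from the splitting $B_n^*(\ee^{-(T-s)H_n}-\ee^{-(T-s)H})g+(B_n^*-B^*)\ee^{-(T-s)H}g$, using $\norm{B_n^*}=\norm{B_n}\le M$, the strong convergence of the semigroups, and the strong convergence $B_n^*\to B^*$ applied to the fixed vector $\ee^{-(T-s)H}g$; and the integrand is dominated by the constant $\bigl((M+\norm{B})\ee^{T\abs{a}}\norm{g}_\cH\bigr)^2$, which is integrable over the finite interval.

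The computations themselves are routine. The one point that needs care — and the reason the hypotheses are stated as they are — is the limit of a product of a merely strongly convergent sequence of operators with a convergent sequence of vectors; this is exactly where the uniform semigroup bound $\ee^{T\abs{a}}$ and the uniform bound $M$ on $(B_n)$ (equivalently on $(B_n^*)$) enter, since strong convergence of each factor alone would not guarantee convergence of the product.
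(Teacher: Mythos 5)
Your proposal is correct and follows essentially the same route as the paper: the triangle inequality for the weak integral, the uniform bound $\ee^{T\abs{a}}$ on the semigroups from the common lower bound, the product rule for strongly convergent uniformly bounded operator sequences, and dominated convergence, with the adjoint case reduced to the explicit representation $(\cB^T)^*=B^*\ee^{-(T-\,\cdot)H}$. You merely spell out the splitting behind the strong convergence of the products, which the paper states in one line.
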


We are now able to prove the main result of this section.

\begin{lemma}\label{lem:weakStrongConv}
 Assume Hypothesis~\ref{hyp:controlConv}. Moreover, let $(u_{0,n})_n$ be a sequence in $\cH$ converging in norm to some $u_0\in\cH$.
 Let $f,f_n\in L^2((0,T),\cU)$, $n\in\N$. Denote by $u$ and $u_n$, $n\in\N$, the mild solutions to the abstract Cauchy problems
 \begin{equation}\label{eq:abstrCauchy_limit}
  \partial_t u(t) + H u(t) = B f(t) \quad\text{ for }\quad 0<t<T,\quad u(0)=u_0,
 \end{equation}
 and
 \begin{equation}\label{eq:abstrCauchy_n}
  \partial_t u_n(t) + H_n u_n(t) = B_n f_n(t) \quad\text{ for }\quad 0<t<T,\quad u_n(0)=u_{0,n},
 \end{equation}
 respectively.
 \begin{enumerate}
  \item If $(f_n)_n$ converges to $f$ in $L^2((0,T),\cU)$, then $(u_n(t))_n$ converges to $u(t)$ in $\cH$ for all $t\in(0,T]$.
        Moreover, $(u_n)_n$ converges to $u$ in $L^2((0,T),\cH)$.
  \item If $(f_n)_n$ converges to $f$ weakly in $L^2((0,T),\cU)$, then $(u_n(t))_n$ converges to $u(t)$ weakly in $\cH$ for all
        $t\in(0,T]$. Moreover, $(u_n)_n$ converges to $u$ weakly in $L^2((0,T),\cH)$.
 \end{enumerate}

 \begin{proof}
  By definition of the mild solution and the controllability map, we have
  \begin{equation}\label{eq:sep:u_n}
   u_n(t) = \ee^{-tH_n} u_{0,n} + \int_0^t \ee^{-(t-s)H_n} B_n f_n(s)\,\dd s
   = \ee^{-tH_n} u_{0,n} + \cB_n^t f_n|_{(0,t)}
  \end{equation}
  and, analogously,
  \begin{equation}\label{eq:sep:u}
   u(t) = \ee^{-tH} u_0 + \cB^t f|_{(0,t)}.
  \end{equation}
  Moreover, observe that
  \begin{equation}\label{eq:cBntBound}
   \norm{ \cB_n^t } \le \sqrt{t}\ee^{t\abs{a}}\norm{B_n} \le \sqrt{T}\ee^{T\abs{a}}\norm{B_n}.
  \end{equation}

  Now, if $(f_n)_n$ converges to $f$ in $L^2((0,T),\cU)$, applying Lemma~\ref{lem:controllabilitymap} to $\cB_n^t$ and $\cB^t$
  implies that $(u_n(t))_n$ converges to $u(t)$ in $\cH$ for all $t\in (0,T]$.

  Taking into account that by~\eqref{eq:sep:u_n} and~\eqref{eq:cBntBound} one has
  \begin{equation}\label{eq:mildSolBound}
   \begin{aligned}
    \norm{ u_n(t) }_\cH
    &\le \ee^{-ta}\norm{ u_{0,n} }_\cH + \norm{ \cB_n^t }\norm{f_n}_{L^2((0,t),\cU)}\\
    &\le \ee^{T\abs{a}}\norm{ u_{0,n} }_\cH + \sqrt{T}\ee^{T\abs{a}}\norm{B_n}\norm{f_n}_{L^2((0,T),\cU)}
   \end{aligned}
  \end{equation}
  for all $n$, the boundedness of the sequences $(u_{0,n})_n$, $(f_n)_n$, and $(B_n)_n$ yields that
  \begin{equation*}
   \norm{ u - u_n }_{L^2((0,T),\cH)}^2 = \int_0^T \norm{ u(t) - u_n(t) }_\cH^2 \,\dd t \xrightarrow[n\to\infty]{}0
  \end{equation*}
  by Lebesgue's dominated convergence theorem. This completes the proof of part~(a).

  Now suppose that $(f_n)_n$ converges to $f$ only weakly in $L^2((0,T),\cU)$. Since then $(f_n)_n$ is still bounded, applying
  Lemma~\ref{lem:controllabilitymap} to $(\cB_n^t)^*$ and $(\cB^t)^*$ implies that for every $g\in\cH$ one has
  \begin{align*}
   \langle g,\cB_n^t f_n|_{(0,t)} \rangle_\cH
   &= \langle (\cB_n^t)^* g, f_n \rangle_{L^2((0,t),\cU)}\\
   &\xrightarrow[n\to\infty]{}
   \langle (\cB^t)^* g, f \rangle_{L^2((0,t),\cU)} = \langle g,\cB^t f|_{(0,t)} \rangle_\cH,
  \end{align*}
  that is, $(\cB_n^t f_n|_{(0,t)})_n$ converges to $\cB^t f|_{(0,t)}$ weakly in $\cH$. In turn, by~\eqref{eq:sep:u_n}
  and~\eqref{eq:sep:u}, $(u_n(t))_n$ converges to $u(t)$ weakly in $\cH$ for all $t\in(0,T]$.

  Taking into account~\eqref{eq:mildSolBound} and the boundedness of $(u_{0,n})_n$, $(f_n)_n$, and $(B_n)_n$, we now conclude for
  every $h\in L^2((0,T),\cH)$ that
  \begin{align*}
   \langle h,u_n\rangle_{L^2((0,T),\cH)}
   &= \int_0^T \langle h(t),u_n(t)\rangle_\cH\,\dd t\\
   &\xrightarrow[n\to\infty]{}\int_0^T \langle h(t),u(t)\rangle_\cH\,\dd t=\langle h,u\rangle_{L^2((0,T),\cH)}
  \end{align*}
  by Lebesgue's dominated convergence theorem. This shows~(b) and, hence, completes the proof.
 \end{proof}%
\end{lemma}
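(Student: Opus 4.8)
The plan is to work throughout from the mild-solution representation
\[
 u_n(t) = \ee^{-tH_n}u_{0,n} + \cB_n^t f_n|_{(0,t)}, \qquad u(t) = \ee^{-tH}u_0 + \cB^t f|_{(0,t)},
\]
where $\cB_n^t$ and $\cB^t$ are the controllability maps over the interval $(0,t)$, and to feed this into the strong convergences $\cB_n^t\to\cB^t$ and $(\cB_n^t)^*\to(\cB^t)^*$ furnished by Lemma~\ref{lem:controllabilitymap} (applied with $t$ in place of $T$; the hypotheses on $H_n,B_n$ are independent of the time horizon, so this is legitimate for every $t\in(0,T]$). The standing quantitative input is the uniform bound $\norm{\ee^{-tH_n}}\le\ee^{T\abs{a}}$ coming from the common lower bound $a$, together with the uniform boundedness of $(\norm{\cB_n^t})_n$, $(\norm{B_n})_n$, $(\norm{u_{0,n}})_n$, and $(\norm{f_n}_{L^2})_n$; the last two hold because norm-convergent and weakly convergent sequences are bounded.

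For part~(a) I would first prove pointwise convergence $u_n(t)\to u(t)$ in $\cH$ for each fixed $t\in(0,T]$. The first summand is handled by the elementary principle that a uniformly bounded, strongly convergent family of operators applied to a norm-convergent sequence of vectors converges in norm: splitting $\ee^{-tH_n}u_{0,n}-\ee^{-tH}u_0=\ee^{-tH_n}(u_{0,n}-u_0)+(\ee^{-tH_n}-\ee^{-tH})u_0$ makes both contributions vanish. For the second summand I would write $\cB_n^t f_n=\cB_n^t f+\cB_n^t(f_n-f)$; the first piece converges to $\cB^t f$ by strong convergence of $\cB_n^t$, and the second is controlled by $\norm{\cB_n^t}\,\norm{f_n-f}_{L^2}\to0$. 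To upgrade to convergence in $L^2((0,T),\cH)$, I would observe that the explicit bound on $\norm{u_n(t)}_\cH$ in terms of $\norm{u_{0,n}}$, $\norm{B_n}$, and $\norm{f_n}_{L^2}$ is uniform in both $t$ and $n$, so $\norm{u_n(t)-u(t)}_\cH^2$ is dominated by a constant (integrable over the finite interval $(0,T)$), whence $\int_0^T\norm{u_n(t)-u(t)}_\cH^2\,\dd t\to0$ by dominated convergence.

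For part~(b) the crucial point, and the step I expect to be the main obstacle, is that strong operator convergence of $\cB_n^t$ combined with only weak convergence of $f_n$ does \emph{not} by itself yield weak convergence of $\cB_n^t f_n$; a strong-times-weak pairing need not converge. The remedy is to pass to adjoints: for $g\in\cH$,
\[
 \langle g, \cB_n^t f_n|_{(0,t)}\rangle_\cH = \langle (\cB_n^t)^* g, f_n\rangle_{L^2((0,t),\cU)}.
\]
Here $(\cB_n^t)^* g\to(\cB^t)^* g$ in norm by Lemma~\ref{lem:controllabilitymap}, while $f_n\rightharpoonup f$ weakly; a norm-convergent sequence paired with a bounded weakly convergent one converges, as seen from the split $\langle (\cB_n^t)^*g-(\cB^t)^*g, f_n\rangle+\langle (\cB^t)^*g, f_n-f\rangle$, both terms tending to $0$. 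Combined with the norm convergence of $\ee^{-tH_n}u_{0,n}$, this gives $u_n(t)\rightharpoonup u(t)$ for each $t$. To reach weak convergence in $L^2((0,T),\cH)$ I would test against arbitrary $h\in L^2((0,T),\cH)$, use $\langle h(t), u_n(t)\rangle_\cH\to\langle h(t), u(t)\rangle_\cH$ pointwise, dominate $\abs{\langle h(t), u_n(t)\rangle_\cH}\le M\norm{h(t)}_\cH$ by the uniform bound $M$ on $\norm{u_n(t)}_\cH$ (with $\norm{h(\cdot)}_\cH\in L^1(0,T)$ since the interval is finite), and invoke dominated convergence once more. The essential asymmetry to keep in mind is that it is precisely the adjoint reformulation that converts the inadmissible strong-times-weak pairing into the admissible norm-times-weak one, which is exactly why Lemma~\ref{lem:controllabilitymap} is stated for both the maps and their adjoints.
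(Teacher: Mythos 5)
Your proposal is correct and follows essentially the same route as the paper: the same splitting of the mild solution into $\ee^{-tH_n}u_{0,n}+\cB_n^t f_n|_{(0,t)}$, the same appeal to Lemma~\ref{lem:controllabilitymap} (applied with time horizon $t$) together with the uniform bounds on $\norm{\cB_n^t}$, $\norm{B_n}$, $\norm{u_{0,n}}$, $\norm{f_n}$, the same passage to adjoints to turn the strong-times-weak pairing into a norm-times-weak one in part~(b), and the same dominated-convergence step to upgrade pointwise to $L^2((0,T),\cH)$ convergence. The only difference is that you write out explicitly the elementary splittings that the paper leaves implicit.
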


\begin{remark}
 The moral of the proof of Lemma~\ref{lem:weakStrongConv} is as follows: the (weak) convergence of $(u_n(T))_n$ to $u(T)$ in $\cH$
 follows easily from Lemma~\ref{lem:controllabilitymap}. From this, we get the (weak) convergence of $(u_n(t))_n$ to $u(t)$ in $\cH$
 for every $t\in(0,T]$ by replacing $T$ with $t$ and taking into account that $(f_n|_{(0,t)})_n$ converges to $f|_{(0,t)}$(weakly)
 in $L^2((0,t),\cU)$. Finally, the (weak) convergence of $(u_n)_n$ to $u$ in $L^2((0,T),\cH)$ follows by Lebesgue's dominated
 convergence theorem.
\end{remark}

\begin{corollary}\label{cor:weakConv}
 If in the situation of Lemma~\ref{lem:weakStrongConv} the sequence $(f_n)_n$ converges to $f$ weakly in $L^2((0,T),\cH)$ and each
 $f_n$ is a null-control for the initial value problem~\eqref{eq:abstrCauchy_n}, then $f$ is a null-control for the initial value
 problem~\eqref{eq:abstrCauchy_limit}.

 \begin{proof}
  It follows from the first statement in part~(b) of Lemma~\ref{lem:weakStrongConv} for $t=T$ that
  \begin{equation*}
   \norm{u(T)}_\cH^2 = \langle u(T),u(T)\rangle_\cH = \lim_{n\to\infty} \langle u(T),u_n(T)\rangle_\cH = 0,
  \end{equation*}
  that is, $u(T)=0$, which proves the claim.
 \end{proof}%
\end{corollary}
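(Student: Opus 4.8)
The plan is to obtain the corollary as an essentially immediate consequence of the terminal-time case of Lemma~\ref{lem:weakStrongConv}(b). The starting observation is that $f_n$ being a null-control for~\eqref{eq:abstrCauchy_n} means exactly that the corresponding mild solution vanishes at the final time, that is, $u_n(T)=0$ for every $n\in\N$. The whole claim therefore reduces to verifying that the limiting mild solution $u$ satisfies $u(T)=0$ as well.

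First I would apply part~(b) of Lemma~\ref{lem:weakStrongConv} with the choice $t=T$. Since by hypothesis $(f_n)_n$ converges to $f$ weakly in $L^2((0,T),\cU)$, and $(u_{0,n})_n$ converges to $u_0$ in norm as required in that lemma, the first statement in part~(b) yields that the sequence of terminal values $(u_n(T))_n$ converges to $u(T)$ weakly in $\cH$. This is the only substantial input: all of the genuine analytic content—the strong convergence of the controllability maps and of their adjoints, and the passage from weak convergence of the inhomogeneities to weak convergence of the solutions—has already been carried out in Lemmas~\ref{lem:controllabilitymap} and~\ref{lem:weakStrongConv}.

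It then remains to combine $u_n(T)=0$ with the weak convergence of $(u_n(T))_n$ to $u(T)$. The neatest way is to test the weak convergence against the limit itself, obtaining
\[
 \norm{u(T)}_\cH^2=\langle u(T),u(T)\rangle_\cH=\lim_{n\to\infty}\langle u(T),u_n(T)\rangle_\cH=0,
\]
because every term $\langle u(T),u_n(T)\rangle_\cH$ vanishes. Hence $u(T)=0$, which is precisely the assertion that $f$ is a null-control for~\eqref{eq:abstrCauchy_limit}.

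I do not expect a genuine obstacle at this stage. The statement is a formal corollary of Lemma~\ref{lem:weakStrongConv}(b), and the only point deserving a moment's attention is the elementary fact that a sequence which is identically zero can only converge weakly to zero; this is captured by the single pairing above and requires neither weak lower semicontinuity of the norm nor any uniqueness-of-limit argument beyond it.
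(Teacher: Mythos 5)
Your argument is correct and coincides with the paper's own proof: both invoke the first statement of Lemma~\ref{lem:weakStrongConv}(b) at $t=T$, use that $u_n(T)=0$ for every $n$, and test the weak convergence against $u(T)$ to get $\norm{u(T)}_\cH^2=\lim_{n\to\infty}\langle u(T),u_n(T)\rangle_\cH=0$.
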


\begin{remark}\label{rem:weakConv}
 As follows from the proof of Lemma~\ref{lem:weakStrongConv}, the statements of Lemma~\ref{lem:weakStrongConv}\,(b) and
 Corollary~\ref{cor:weakConv} are still valid if the sequence $(u_{0,n})_n$ of initial data converges in $\cH$ only weakly. Indeed,
 in this case, one has
 \begin{equation*}
  \langle g,\ee^{-tH_n}u_{0,n}\rangle_\cH = \langle \ee^{-tH_n}g,u_{0,n}\rangle_\cH\xrightarrow[n\to\infty]{}
  \langle \ee^{-tH}g,u_{0}\rangle_\cH=\langle g,\ee^{-tH}u_{0}\rangle_\cH
 \end{equation*}
 for every $g\in\cH$, so that the first summand on the right-hand side of~\eqref{eq:sep:u_n} converges weakly to $\ee^{-tH}u_0$. The
 rest of the reasoning then stays exactly the same.
\end{remark}

\begin{proof}[Proof of Theorem \ref{thm:main}]
 In the situation of Lemma~\ref{lem:weakStrongConv}, take $\cH=\cU=L^2(\Gamma)$, $B=\chi_{\Gamma\cap S}$,
 $B_n=\chi_{\Gamma_{L_n}\cap S}$, $H=H_\Gamma(A,V)$, $H_n=H_{L_n}(A,V)$, $u_0=\tilde{u}$, and
 $u_{0,n}=\chi_{\Gamma_{L_n}}\cdot\tilde{u}$. Then, one has $B_n^*=B_n$, and $(B_n)_n$ converges strongly to $B=B^*$. Thus, in view
 of Corollary~\ref{cor:semigroup}, Hypothesis~\ref{hyp:controlConv} is satisfied. Moreover, $(u_{0,n})_n$ converges to $u_0$ in
 $L^2(\Gamma)$. The claim of the theorem now follows immediately from part~(b) of Lemma~\ref{lem:weakStrongConv} and
 Corollary~\ref{cor:weakConv}.
\end{proof}

\begin{remark}[An alternative approach]\label{rem:alternative}
We sketch an alternative strategy to obtain null-controllability of the limiting system from null-controllability of the
corresponding system on exhausting domains:
Assume Hypothesis~\ref{hyp:controlConv}, and, in addition, let $P_n$, $n\in\N$, be orthogonal projections in $\cH$ such that
$(P_n)_n$ converges strongly to the identity operator on $\cH$.

If there is a constant $C>0$ such that for all $n \in \N$ and all $v_0 \in \cH$ one has
\begin{equation}\label{eq:observabilityHn}
 \norm{\ee^{-TH_n}P_nv_0}_\cH^2 \le C^2 \int_0^T \norm{ B^*_n\ee^{-tH_n}P_nv_0 }_\cU^2\,\dd t,
\end{equation}
then we deduce by taking the limit as $n\to\infty$, arguing as in the proof of Lemma~\ref{lem:controllabilitymap}, that
\begin{equation}\label{eq:observabilityH}
 \norm{ \ee^{-TH}v_0 }_\cH^2 \le C^2 \int_0^T \norm{ B^*\ee^{-tH}v_0 }_\cU^2 \,\dd t
\end{equation}
for all $v_0\in\cH$. The latter means that the homogeneous Cauchy problem
\begin{equation}\label{eq:homogeneousCauchy}
 \partial_t v(t) + H v(t) = 0 \quad\text{ for }\quad 0<t<T,\quad v(0)=v_0,
\end{equation}
satisfies a so-called final-state-observability inequality, where the term `final-state' refers to  the state $v(T)= \ee^{-TH}v_0$
at time $T$, and the observation is determined by the adjoint of the control operator $B$. We do not discuss in detail the notion of
observability but refer the reader to~\cite{Coron07},~\cite{TW09}, or~\cite{EgidiNSTTV}; see also Remark~\ref{rem:obs} in
Appendix~\ref{sec:optimalFeedback} below. The main point is that, under the given assumptions, null-controllability for the
inhomogeneous Cauchy problem~\eqref{eq:abstrCauchy_limit} is equivalent to final-state-observability of the corresponding
homogeneous Cauchy problem~\eqref{eq:homogeneousCauchy}, and the associated control cost agrees with the minimal possible constant
$C$ in~\eqref{eq:observabilityH}.

In the situation of Subsection~\ref{subsec:main}, in addition to the choices made in the proof of Theorem~\ref{thm:main}, we take
$P_n=\chi_{\Gamma_{L_n}}$. Then, null-controllability of the system~\eqref{eq:abstrCauchyH} on $\Omega=\Gamma_{L_n}$ with a uniform
bound $C$ on the associated control cost, as established in the particular situations discussed in Section~\ref{sec:applications},
leads to an inequality of the type~\eqref{eq:observabilityHn}. The above argument then shows that this inequality carries over to
the limiting domain, yielding null-controllability along with a corresponding bound on the associated control cost.
\end{remark}

\subsection*{Convergence of feedback operators}
Most of the convergence results for null-controls which we provided 
can be lifted to convergence of so-called feedback operators.
To exemplify this, we formulate as a closing of this section a consequence of Corollary~\ref{cor:weakConv}.

We call a mapping $F\colon \cH\to L^2((0,T),\cU)$ a~\emph{feedback operator} associated to the system~\eqref{eq:abstrCauchy_limit}
if
\begin{equation*}
 \ee^{-TH}+\cB^T F=0,
\end{equation*}
that is, if $F$ maps every initial datum $u_0$ to a corresponding null-control.

It is well known (see, e.g., \cite[Proposition~12.1.2]{TW09}; see also Appendix~\ref{sec:optimalFeedback} below) that if the
system~\eqref{eq:abstrCauchy_limit} is null-controllable in time $T>0$, then an associated bounded linear feedback operator exists.

>From Corollary~\ref{cor:weakConv} we obtain the following result.

\begin{corollary}\label{cor:weakFeedback}
 Assume Hypothesis~\ref{hyp:controlConv}, and let $F_n$, $n\in\N$, be a bounded linear feedback operator for the
 system~\eqref{eq:abstrCauchy_n} such that $(F_n)_n$ converges weakly to some $F\colon \cH\to L^2((0,T),\cU)$. Then, $F$ is a
 feedback operator for the system~\eqref{eq:abstrCauchy_limit}.

 \begin{proof}
  Let $u_0\in\cH$, $f:=F u_0$, and $f_n:=F_n u_0$. Then, each $f_n$ is a null-control for the system~\eqref{eq:abstrCauchy_n}, and
  $(f_n)_n$ converges weakly to $f$ by hypothesis. Thus, $f$ is a null-control for the system~\eqref{eq:abstrCauchy_limit} by
  Corollary~\ref{cor:weakConv}, which proves the claim.
 \end{proof}%
\end{corollary}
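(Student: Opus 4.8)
The plan is to exploit that being a feedback operator is a \emph{pointwise} condition on the initial data: a mapping $F\colon\cH\to L^2((0,T),\cU)$ is a feedback operator for the system~\eqref{eq:abstrCauchy_limit} precisely when, for every $u_0\in\cH$, the function $Fu_0$ is a null-control for~\eqref{eq:abstrCauchy_limit} with initial datum $u_0$. This reduces the claim to a statement that can be verified one initial datum at a time, and for each fixed initial datum the already established Corollary~\ref{cor:weakConv} should do all the work.

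Concretely, I would fix an arbitrary $u_0\in\cH$ and set $f:=Fu_0$ as well as $f_n:=F_nu_0$ for $n\in\N$. Since $F_n$ is by assumption a feedback operator for~\eqref{eq:abstrCauchy_n}, the function $f_n$ is a null-control for~\eqref{eq:abstrCauchy_n} with initial datum $u_0$. The hypothesis that $(F_n)_n$ converges weakly to $F$ is to be read in the sense that the images converge weakly pointwise; thus $(f_n)_n=(F_nu_0)_n$ converges weakly to $f=Fu_0$ in $L^2((0,T),\cU)$.

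At this point I would invoke Corollary~\ref{cor:weakConv}. To match its hypotheses, I take the constant sequence of initial data $u_{0,n}:=u_0$, which trivially converges in norm to $u_0$; together with Hypothesis~\ref{hyp:controlConv} and the weak convergence of $(f_n)_n$ to $f$, this places us exactly in the setting of that corollary. It then yields that $f=Fu_0$ is a null-control for~\eqref{eq:abstrCauchy_limit} with initial datum $u_0$. Since $u_0\in\cH$ was arbitrary, $F$ maps every initial datum to a null-control and is therefore a feedback operator for~\eqref{eq:abstrCauchy_limit}.

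I do not anticipate a genuine obstacle here, since all of the analytic content has already been absorbed into Corollary~\ref{cor:weakConv} and the lemmas preceding it. The only point that demands care is the interpretation of ``weak convergence of $(F_n)_n$'': one must make sure it is understood in the weak operator sense, so that the images $F_nu_0$ converge weakly for each fixed $u_0$, as this is exactly the input that Corollary~\ref{cor:weakConv} consumes.
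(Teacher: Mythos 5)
Your proposal is correct and follows essentially the same route as the paper: fix $u_0$, set $f_n:=F_nu_0$ and $f:=Fu_0$, observe that each $f_n$ is a null-control for~\eqref{eq:abstrCauchy_n} and that $(f_n)_n$ converges weakly to $f$, and conclude via Corollary~\ref{cor:weakConv}. Your added care about reading the operator convergence in the weak operator sense and about taking the constant sequence of initial data is sound but does not change the argument.
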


If $F_n\colon \cH\to L^2((0,T),\cU)$ for each $n\in\N$ is a bounded linear feedback operator for the system~\eqref{eq:abstrCauchy_n}
such that $\sup_n\norm{F_n}<\infty$ and if $\cH$ is separable, then it is well known that $(F_n)_n$ has a weakly convergent
subsequence $(F_{n_k})_k$, see, e.g., \cite[Ex.~4.26]{Wei80}. The corresponding weak limit $F$ satisfies
\begin{equation*}
 \norm{F} \le \liminf \norm{F_{n_k}}.
\end{equation*}
Corollary~\ref{cor:weakFeedback} can now by applied to every such weakly convergent subsequence. Since the optimal feedback operator
$\cF^T$ for the system~\eqref{eq:abstrCauchy_limit} has minimal operator norm, this implies that
\begin{equation*}
 \norm{\cF^T} \le \liminf \norm{F_n}.
\end{equation*}

\appendix

\section{The optimal feedback operator}\label{sec:optimalFeedback}

As in the main part of the paper, let $\cH$ and $\cU$ be Hilbert spaces, $H$ a lower semibounded self-adjoint operator on $\cH$,
$B\colon\cU\to\cH$ a bounded linear operator, and $T>0$. Denote by $\cB^T\colon L^2((0,T),\cU)\to\cH$ the controllability map
associated to the system
\begin{equation}\label{eq:abstrCauchy:app}
 \partial_t u(t) + H u(t) = Bf(t) \quad\text{ for }\quad 0<t<T,\quad u(0)=u_0,
\end{equation}
with $u_0\in\cH$ and $f\in L^2((0,T),\cU)$.

Clearly, the controllability map $\cB^T$ is linear and bounded. In particular, given two null-controls $f$ and $\tilde{f}$ for the
initial datum $u_0$, one has
\begin{equation}\label{eq:nullcontrol}
 0=(\ee^{-TH}u_0 + \cB^T f) - (\ee^{-TH}u_0+\cB^T \tilde{f}) = \cB^T(f-\tilde{f}),
\end{equation}
so that the set of all null-controls for the initial datum $u_0$ is given by
\begin{equation}\label{eq:setofnullcontrols}
 f + \Ker \cB^T
\end{equation}
for any such null-control $f$.

Recall that a mapping $F\colon \cH\to L^2((0,T),\cU)$ is called a~\emph{feedback operator} associated to the
system~\eqref{eq:abstrCauchy:app} if
\begin{equation*}
 \ee^{-TH}+\cB^T F=0,
\end{equation*}
that is, if $F$ maps every initial datum $u_0$ to a corresponding null-control.

The following well-known abstract result from~\cite{TW09} guarantees the existence of bounded linear feedback operators for
null-controllable systems. For convenience of the reader, we give the whole statement, but reproduce only the part of the proof that
we need.

\begin{proposition}[{\cite[Proposition~12.1.2]{TW09}}]\label{prop:TW}
 Let $\cH_1, \cH_2,\cH_3$ be Hilbert spaces, and let $X\colon \cH_1 \to \cH_3$, $Y\colon \cH_2 \to \cH_3$ be bounded linear
 operators. Then, the following are equivalent:
 \begin{enumerate}
  \item $\Ran X \subset \Ran Y$;
  \item There is $c>0$ such that $\norm{X^* z} \le c \norm{Y^* z}$ for all $z\in \cH_3$;
  \item There is a bounded linear operator $Z\colon \cH_1 \to \cH_2$ satisfying $X=YZ$.
 \end{enumerate}

 \begin{proof}[Proof of (a)$\Rightarrow$(c)]
  By hypothesis, for every $x \in \cH_1$ there is a unique $y \in ( \Ker Y )^\perp$ with $X x = Y y$, and we
  define $Z \colon \cH_1 \to \cH_2$ by $Z x = y$. By construction, this operator $Z$ satisfies $X = YZ$, and it is easy to see that
  it is linear. It remains to show that $Z$ is bounded. Since $Z$ is everywhere defined, by the closed graph theorem it suffices to
  show that $Z$ is closed. To this end, let $(x_n)_n$ be a sequence in $\cH_1$ such that $x_n \to x$ in $\cH_1$ and $Z x_n \to z$ in
  $\cH_2$. Since $X$ and $Y$ are bounded, this yields on the one hand that $X x_n \to X x$ in $\cH_3$ and, on the other hand, that
  $X x_n = YZ x_n \to Yz$ in $\cH_3$, so that $X x = Y z$. Taking into account that $( \Ker Y )^\perp$ is closed and
  $Z x_n \in ( \Ker Y )^\perp$, one has $z \in ( \Ker Y )^\perp$ and, hence, $Z x = z$, which proves the claim.
 \end{proof}%
\end{proposition}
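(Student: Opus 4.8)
The plan is to establish all three implications around condition~(c), after which the equivalence of (a), (b), and (c) follows at once. Two of these are immediate. If $X = YZ$ with $Z$ bounded, then $\Ran X = \Ran(YZ) \subset \Ran Y$, which gives (c)$\Rightarrow$(a); and taking adjoints yields $X^* = Z^* Y^*$, so that $\norm{X^* z} \le \norm{Z}\,\norm{Y^* z}$ for every $z \in \cH_3$, which is (b) with $c = \norm{Z}$. Thus it remains to prove (a)$\Rightarrow$(c) and (b)$\Rightarrow$(c); together with the two trivial directions these close the loops (a)$\Leftrightarrow$(c) and (b)$\Leftrightarrow$(c).

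For (a)$\Rightarrow$(c) I would construct $Z$ by lifting through $Y$. For each $x \in \cH_1$ the hypothesis~(a) gives $Xx \in \Ran Y$, and there is a unique $y \in (\Ker Y)^\perp$ with $Yy = Xx$; setting $Zx := y$ defines a linear map with $YZ = X$. To see that $Z$ is bounded I would invoke the closed graph theorem: if $x_n \to x$ and $Zx_n \to z$, then boundedness of $X$ and $Y$ forces $Yz = Xx$, while closedness of $(\Ker Y)^\perp$ forces $z \in (\Ker Y)^\perp$, whence $z = Zx$.

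The substantive step is (b)$\Rightarrow$(c), for which I would run the classical factorization argument. Define a map on $\Ran Y^* \subset \cH_2$ by the rule $Y^* z \mapsto X^* z$. This is well defined because, by~(b), $Y^* z_1 = Y^* z_2$ implies $\norm{X^*(z_1 - z_2)} \le c\,\norm{Y^*(z_1 - z_2)} = 0$; the same inequality shows the map has norm at most $c$ on $\Ran Y^*$. I would then extend it by continuity to $\overline{\Ran Y^*}$ and by $0$ on the orthogonal complement, using the identification $(\Ran Y^*)^\perp = \Ker Y$, to obtain a bounded operator $W \colon \cH_2 \to \cH_1$ with $W Y^* = X^*$ on all of $\cH_3$. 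Passing to adjoints gives $Y W^* = X$, so that $Z := W^*$ is the required bounded operator.

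I expect the only real obstacle to lie in this last implication, specifically in verifying that $Y^* z \mapsto X^* z$ is well defined and bounded and in controlling its continuous extension to $\overline{\Ran Y^*}$; every remaining step reduces to formal manipulations with ranges, kernels, and adjoints. Once (a)$\Rightarrow$(c) and (b)$\Rightarrow$(c) are in hand, combining them with (c)$\Rightarrow$(a) and (c)$\Rightarrow$(b) yields the full equivalence.
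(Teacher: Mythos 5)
Your proof of (a)$\Rightarrow$(c) is essentially identical to the paper's: the same lift through $Y$ onto $(\Ker Y)^\perp$ followed by the closed graph theorem. The paper deliberately proves only this one implication (it cites \cite{TW09} for the rest), whereas you also supply the two trivial directions and the Douglas-type factorization for (b)$\Rightarrow$(c); that extra argument --- the map $Y^*z \mapsto X^*z$ being well defined and bounded by (b), extended by continuity on $\overline{\Ran Y^*}$ and by zero on $(\Ran Y^*)^\perp = \Ker Y$, then dualized --- is correct and standard, so your proposal establishes the full equivalence rather than just the part the paper needs.
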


If the system~\eqref{eq:abstrCauchy:app} is null-controllable in time $T>0$, then the implication (a)$\Rightarrow$(c) in
Proposition~\ref{prop:TW} with $X=\ee^{-TH}\colon\cH\to\cH$ and $Y=\cB^T\colon L^2((0,T),\cU)\to\cH$ yields that $F=-Z$ is a bounded
linear feedback operator for this system.

\begin{remark}\label{rem:obs}
 As established in the proof of Lemma~\ref{lem:controllabilitymap}, we have $(\cB^T)^*=B^*\ee^{-(T-\cdot)H}$, so that
 \begin{equation*}
  \norm{ (\cB^T)^*v_0 }_{L^2((0,T),\cU)}^2 = \int_0^T \norm{ B^*\ee^{-(T-s)H}v_0 }_\cU^2 \,\dd s
  = \int_0^T \norm{ B^*\ee^{-tH}v_0 }_\cU^2 \,\dd s
 \end{equation*}
 for all $v_0\in\cH$. The equivalence (a)$\Leftrightarrow$(b) in Proposition~\ref{prop:TW} with the choice $X=\ee^{-TH}$ and
 $Y=\cB^T$ as above therefore yields the equivalence between null-controllability of~\eqref{eq:abstrCauchy:app} and the so-called
 final-state-observability of the corresponding homogeneous system as mentioned in Remark~\ref{rem:alternative}.
\end{remark}

Given any two feedback operators $F$ and $\tilde{F}$ for the system~\eqref{eq:abstrCauchy:app}, one sees analogously
to~\eqref{eq:nullcontrol} that $0 = (\ee^{-TH} + \cB^T F) - (\ee^{-TH} + \cB^T \tilde{F}) = \cB^T(F-\tilde{F})$, that is,
\begin{equation*}
 \Ran (F-\tilde{F}) \subset \Ker \cB^T.
\end{equation*}
Hence, denoting by $P$ the orthogonal projection in $L^2((0,T),\cU)$ onto $\Ker \cB^T$, the operator $\cF^T:=(\Id-P)F$ is again a
feedback operator for the system~\eqref{eq:abstrCauchy:app} and does not depend on the choice of $F$. In particular, one has
$\norm{\cF^T}\le\norm{F}$ for every bounded linear feedback operator $F$. Thus, $\cF^T$ is a bounded linear feedback operator with
minimal operator norm. Moreover, by definition of the orthogonal projection $P$, for every $u_0\in\cH$ one has
\begin{align*}
 \norm{\cF^T u_0}_{L^2((0,T),\cU)}
 &= \norm{Fu_0 - PFu_0}_{L^2((0,T),\cU)}\\
 &= \min\{ \norm{Fu_0 - g}_{L^2((0,T),\cU)} \mid g\in\Ker \cB^T \}\\
 &= \min\{ \norm{f}_{L^2((0,T),\cU)} \mid \ee^{-TH}u_0 + \cB^T f = 0 \},
\end{align*}
where for the last equality we have taken into account that by~\eqref{eq:setofnullcontrols} the set of all null-controls for the
initial datum $u_0$ is given by $Fu_0 + \Ker \cB^T$. Thus, $\cF^T u_0\in (\Ker \cB^T)^\perp$ is the uniquely determined null-control
associated to $u_0$ with minimal norm.

\subsection*{Acknowledgements}
The authors would like to thank M.~Egidi, I.~Naki\'c, M.~T\"aufer, and M.~Tautenhahn
for fruitful discussions and helpful remarks on an earlier version of the manuscript.
I.~V.~thanks the anonymous referee of~\cite{EgidiV-18} for stimulating remarks.


\end{document}